\numberwithin{equation}{section}
\newcommand{\PP}{{\mathbb P}}
\newcommand{\Pic}{{\rm{Pic}}}
\newcommand{\Effbar}{\overline{\rm{Eff}}}
\newcommand{\res}{\operatorname{res}}
\newcommand{\xdashrightarrow}[2][]{\ext@arrow 0359\rightarrowfill@@{#1}{#2}}
\newcommand{\Eff}{{\operatorname{Eff}}}
\newcommand{\M}[2]{\mathcal{M}_{{#1}, {#2}}}
\newcommand{\Mbar}[2]{\overline{\mathcal{M}}_{{#1}, {#2}}}
\newcommand{\Mbarmu}[2]{\Xi\overline{\mathcal{M}}_{{#1}, {#2}}(\mu)}
\newcommand{\QQ}{\mathbb{Q}}
\newcommand{\HH}{\mathcal{H}}
\newcommand{\F}{\mathcal{F}}
\newcommand{\RR}{\mathbb{R}}
\newcommand{\CC}{\mathbb{C}}
\newcommand{\AAA}{\mathbb{A}}
\newcommand{\OO}{\mathcal{O}}
\newcommand{\ZZ}{\mathbb{Z}}
\newcommand{\PGL}{\operatorname{PGL }}
\newcommand{\movcurve}{\overline{\mbox{Mov}}}
\newcommand{\Mgb}{\overline{\mathcal{M}}_g}
\newcommand{\Mgnb}{\overline{\mathcal{M}}_{g,n}}
\newcommand{\Cl}{\operatorname{Cl}}
\newcommand{\Num}{\operatorname{N}}
\newcommand{\Nef}{\operatorname{Nef}}
\newcommand{\Plus}{\texttt{+}}
\newcommand{\exc}{\mathfrak{exc}}
\theoremstyle{plain}
\newtheorem{theorem}{Theorem}[section]
\newtheorem{lemma}[theorem]{Lemma}
\newtheorem{proposition}[theorem]{Proposition}
\theoremstyle{definition}
\newtheorem{remark}[theorem]{Remark}
\newtheorem{definition}[theorem]{Definition}
\title[Isoresidual fibrations and moduli space]{Isoresidual Fibrations and the birational geometry of moduli of pointed stable curves}
\author[S. Mullane]{Scott Mullane}
\address{School of Mathematics and Statistics, University of Melbourne, Australia}
\email{{\tt mullanes@unimelb.edu.au}}
\begin{document}

\maketitle


\nopagebreak

\begin{abstract}
We show that the pseudo-effective cone of divisors of $\overline{\mathcal{M}}_{0,n}$ is not polyhedral for $n\geq 8$ by constructing an extremal non-polyhedral ray of the dual cone of moving curves via maps on meromorphic strata of differentials returning the residues at the poles of the parameterised differentials. An immediate corollary is that these spaces are not Mori Dream Spaces.
\end{abstract}

\section{Introduction}
Affine coordinates for the moduli space of isomorphism classes of pointed rational curves, $\mathcal{M}_{0,n}$, are obtained by normalising the $\PGL(2)$-action. Though this is only the start of the coordinate story. Kapravov described the Grothendieck-Knudsen compactification $\Mbar{0}{n}$ via Pl\"ucker coordinates to be a Chow quotient of the Grassmannian $G(2,n)$~\cite{Kap1}, and then as an iterative blow-up of projective $(n-3)$-space~\cite{Kap2}, a description amenable to birational geometry. While hypergeometric functions are used to obtain the Aomoto-Gelfand or GKZ coordinates~\cite{A87,GKZ90} that help understand the monodromy of the Gauss-Manin connection, and Cluster algebra or Fock-Goncharov coordinates \cite{FG06} are obtained from diagonals of $n$-gons, or cyclic dihedral cross-ratios and have had significant applications in scattering amplitudes in physics.{{}}

Alternatively, given any integer partition $\mu=(m_1,\dots,m_n)$ of -$2$, a pointed rational curve admits a unique differential with zeros and poles at the marked points of type $\mu$ up to $\CC^*$-scaling. Integrating this differential on a $\ZZ$-basis for homology of the curve punctured at the poles ($p_i$ corresponding to negative $m_i$) relative to the zeros ($p_i$ corresponding to non-negative $m_i$) gives, courtesy of the Gauss-Manin connection, 
manifold coordinates for $\mathcal{M}_{0,n}$ with transition functions obtained as change of basis matrices for the relative homology.  In higher genus, through a connection with dynamics, these transcendental coordinates have revealed an unexpected algebraic structure~\cite{EskinMirzakhani,EskinMirzakhaniMohammadi,Filip}. In this paper, we bring this perspective to suborn questions on the birational geometry of the Grothendieck-Knudsen compactification $\Mbar{0}{n}$. {{}}

The pseudo-effective cone of an irreducible $\QQ$-factorial projective variety is the closure of the cone of effective divisors and prescribes the essential birational information of the variety. 
The study of the effective cone of moduli spaces of curves has attracted sustained attention dating back to pioneering work of Harris and Mumford~\cite{HarrisMumford}, who constructed effective divisors to show that $\Mgb$ is of maximal Kodaira dimension for $g$ large enough. 
Over the last 25 years, understanding the pseudo-effective and nef cones (closure of the subcone of ample divisors) of $\Mbar{0}{n}$ has proven to be a difficult question. Though the Kapranov construction gives an explicit description of $\Mbar{0}{n}$ as an iterative blow-up of $\PP^{n-3}$, the situation is complicated by the Picard rank growing exponentially in $n$. Numerous efforts have been undertaken to understand the effective and ample divisors from a number of perspectives~\cite{C09,Faber, F20,GG12,GM11,HK00,CastravetTevelevHypertree,CastravetTevelev,CLTU, T25}. 
{{}}

The structure of the nef cone, a question originally posed by Mumford, has a conjectural description known as the \emph{F-conjecture}~\cite{KMc96} named in honour of Fulton, who first posed the conjecture, and Faber, who proved early cases. In genus zero, the conjecture can be stated as: the Mori cone of curves is generated by the irreducible components of the one-dimensional boundary loci, that is, the loci of stable curves with at least $n-4$ nodes. Gibney, Keel and Morrison~\cite{GKM} showed that the conjecture in genus zero would imply the result in higher genus, concentrating attention in the genus zero case. 
The conjecture is currently known to hold for $n\leq8$~\cite{KMc96,FM}, leaving open $n\geq 9$, with some notable attempts to construct counter examples~\cite{Chen11,CT12}
{{}}

The effective cone of $\Mbar{0}{5}$, 
is generated by the irreducible components of the boundary $\partial\Mbar{0}{n}=\Mbar{0}{n}\setminus\mathcal{M}_{0,n}$, though Hassett and Tschinkel~\cite{HT} showed the effective cone of $\Mbar{0}{6}$ is generated by the boundary and Keel-Vermiere\footnote{Originally provided as a counter example to Fulton's first conjecture that cones of effective cycles in $\Mbar{0}{n}$ in all codimension were generated by boundary classes.} divisors~\cite{KV} (one $S_6$-orbit) that intersect the interior of the moduli space. Castravet and Tevelev~\cite{CastravetTevelevHypertree} generalised the Keel-Vermiere divisors through Brill-Noether theory on singular higher genus curves to produce finitely many extremal rays in the effective cone of $\Mbar{0}{n}$ for each $n\geq 7$ indexed by irreducible hypertrees and conjectured that  these rays with the boundary divisors generated the effective cone. Opie~\cite{Opie} found extremal rays that contradicted the conjecture and further extremal rays were found by Doran,
Giansiracusa, and Jensen~\cite{DGJ}. More work has followed in the case of $n=7$, where Dutour Sikiri\'c and Jovinelly~\cite{DSJ25} found $37$ $S_7$-orbits of new extremal rays via numerical methods and He and Yang~\cite{HY21} showed that the pseudo-effective cone of $\Mbar{0}{7}$ blown-up at a general point is not polyhedral. {{}}

Hu and Keel~\cite[Question 3.2]{HK00} asked whether $\Mbar{0}{n}$ is a Mori Dream Space (MDS) which is equivalent to asking if the Cox ring is finitely generated~\cite[Prop 2.9]{HK00}. 
 This implies the effective cone is polyhedral, but further, it results in the variety being the simplest possible from the perspective of the minimal model program, ``Mori's dream". The effective cone decomposes into finitely many chambers each giving different birational models for the moduli space with potentially modular meaning. In a breakthrough result, Castravet and Tevelev~\cite{CastravetTevelev} used blow-ups of toric surfaces 
  to show that $\Mbar{0}{n}$ has non-finitely generated Cox ring and hence is not a MDS for $n \geq134$. This strategy was used by Gonzalez and Karu to lower the bound to $n\geq 13$~\cite{GK} and then by Hausen, Keicher, and Laface to $n\geq 10$~\cite{HKL}. This approach was then extended by Castravet, Laface, Tevelev, and Ugaglia~\cite{CLTU} to show the stronger result that the effective cone of $\Mbar{0}{n}$ is non-polyhedral for $n\geq10$. However, as explained in~\cite{HKL}, this strategy has been optimised. New ideas are required to approach both the MDS question and the question of polyhedrality of the pseudo-effective cone in the remaining open cases $n=7,8,9$.{{}}

For $g\geq 1$ and $\mu$ any non-zero length $n$ integer partition of $2g-2$, the stratum of canonical divisors of type $\mu$ form a substacks of $\mathcal{M}_{g,n}$ which have been used to identify infinite families of extremal effective divisors~\cite{ChenCoskun, MEff} and higher codimension cycles~\cite{Mhigher} in $\Mgnb$ for fixed $g,n$ and hence show these cones to be non-polyhedral. Recently, in the culmination of much work, a smooth compactifcation of these spaces via multi-scale differentials has been achieved~\cite{BCGGM}  (presented in~\S\ref{sec:multi-scale}). However, when $g=0$ the stratum of canonical divisors of type $\mu$ is identified with $\mathcal{M}_{0,n}$. Hence the strata no longer identify interesting collections of subvarieties and we require a new approach. 

Let $\mu=(a_1,\dots,a_m,-b_1,\dots,-b_{n-m})$ be a partition of -$2$ with $a_i, b_i$ positive integers. A differential of type $\mu$ on any smooth $n$-pointed rational curve is uniquely determined up to scaling by $\CC^*$. Hence for each choice of $\mu$ we obtain a residue map
$$r_\mu: \M{0}{n}\dashrightarrow \PP^{n-m-2}$$
which returns the residues at the first $n-m-1$ poles up to $\CC^*$-scaling (the residue at the final pole is determined by the others). For $m=1$, the map $r_\mu$ becomes finite onto its image and a number of recent works \cite{GT21,GT22,CP23} have investigated this case via flat and algebro-geometric methods. The map is ramified over the resonance hyperplane arrangement in $\PP^{n-m-2}$, defined as the collection of hyperplanes $H_S$ cut out by $\sum_{j\in S}X_j$ for all subsets $S\subset \{0,\dots,n-m-2\}$. The authors then combined in \cite{CGPT24} to consider the case $m=2$ where $r_\mu$ has one dimensional fibres. The generic fibre is a complex curve, the closure of which, in the multi-scale compactification inherits a canonical translation structure. This induces a vector bundle over the complement of the resonance hyperplane arrangement endowed with a connection that acts trivially locally on the fibres, the well-known \emph{Gauss-Manin connection}. {{}}

The study of residues on differentials on rational curves has also arisen recently from a number of other perspectives including the dynamics of polynomial maps~\cite{Sug17}, the Kadomtsev-Petviashvili hierarchy~\cite{BR24}, and the topology of configuration spaces~\cite{Sal23}. In this paper we turn this perspective to the birational geometry of moduli space.{{}}

Our approach is to identify a non-polyhedral extremal ray of the cone of moving curves $\movcurve(\Mbar{0}{n})$ dual to the pseudo-effective cone of divisors.  
The following theorem shifts our search further, from extremal rays of the moving cone, to a search for rational relative dimension one maps with projective general fibre.

\begin{restatable}{theorem}{exrat}\label{thm:exrat}
Let $\pi:X\dashrightarrow Y$ be a dominant rational map between an irreducible normal $\QQ$-factorial projective variety $X$ and a smooth projective variety $Y$ of relative dimension one such that $\pi$ restricts to a morphism over some Zariski open subset $U$ of $Y$. Then the numerical class of a general fibre of $\pi$ forms an extremal ray of $\movcurve(X)$, the cone of moving curves on $X$.
\end{restatable}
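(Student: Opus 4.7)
The strategy is to verify extremality of the ray $\rr_{\geq 0}[F]$ by exhibiting a supporting hyperplane of $\movcurve(X)$ that cuts out exactly this ray, leveraging the BDPP duality $\movcurve(X) = \Effbar(X)^\vee$ and the fact that pullbacks from $Y$ pair trivially with $[F]$. I would first check that $[F] \in \movcurve(X)$, which is immediate since the family $\{\pi^{-1}(y)\}_{y \in U}$ of general fibres dominates $X$; and that for every ample $H$ on $Y$ the divisor $\pi^{*}H$ on $X$---well-defined as the closure of $\pi^{-1}(H \cap U)$, since the indeterminacy locus of $\pi$ has codimension at least two in the normal $\QQ$-factorial variety $X$---is effective with $[F] \cdot \pi^{*}H = 0$. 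Given any hypothetical decomposition $[F] = v_1 + v_2$ with $v_i \in \movcurve(X)$, BDPP forces $v_i \cdot \pi^{*}H = 0$ for every ample $H$, and openness of the ample cone upgrades this to $v_i \cdot \pi^{*}H' = 0$ for every $H' \in N^1(Y)$.

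The key step is to promote this vanishing to the inequality $v_i \cdot D \geq 0$ for every $D \in N^1(X)$ satisfying $D \cdot [F] \geq 0$. For $D \cdot [F] > 0$ I would run a standard cohomological argument: for $m$ sufficiently large, $mD|_F$ has degree exceeding $2g_F - 1$ on a general fibre, so Riemann--Roch exhibits $\pi_{*}\OO_X(mD)$ as a nonzero coherent sheaf on $Y$; tensoring with $\OO_Y(NA_Y)$ for $A_Y$ ample on $Y$ and $N \gg 0$ produces global sections by Serre vanishing, yielding an effective representative of $mD + N\pi^{*}A_Y$. Pairing with $v_i$ and using $v_i \cdot \pi^{*}A_Y = 0$ gives $m(v_i \cdot D) \geq 0$. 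The boundary case $D \cdot [F] = 0$ reduces to the strict one by perturbing $D$ to $D + \epsilon A$ for $A$ ample on $X$, applying the previous case, and letting $\epsilon \to 0$.

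Once this inequality is in hand, $v_i$ lies in the dual (in $N_1(X)$) of the closed half-space $\{D \in N^1(X) : D \cdot [F] \geq 0\}$, which is precisely the ray $\rr_{\geq 0}[F]$; hence each $v_i$ is a non-negative multiple of $[F]$ and extremality follows. The main obstacle I anticipate is the cohomological step: one must ensure $\pi_{*}\OO_X(mD) \neq 0$ generically (via Riemann--Roch on the generic fibre, a smooth irreducible curve in characteristic zero) and set up the projection-formula argument cleanly in the presence of indeterminacy; both become most transparent after passing to a resolution of indeterminacy $\sigma\colon \tilde X \to X$ on which $\tilde\pi = \pi \circ \sigma$ is a morphism, and then transferring the conclusion back to $X$.
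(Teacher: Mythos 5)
Your proposal is correct and takes essentially the same route as the paper: both pass to a resolution of indeterminacy, observe that the fibre class annihilates pullbacks of ample divisors from $Y$, and use the fact that a divisor positive on the general fibre becomes big (hence pseudo-effective) after adding a large multiple of such a pullback --- the paper cites \cite[Lemma 3.23]{KM98} for this, where you reprove it in relative dimension one via Riemann--Roch and Serre vanishing. The only packaging differences are the endgame (you dualise the half-space $\{D : D\cdot [F]\geq 0\}$ to land in $\RR_{\geq 0}[F]$, while the paper picks a single separating divisor $L$ with $F\cdot L>0$ and $F_1\cdot L<0$ and contradicts movingness of $F_1$) and the minor caveat that the general fibre need not be irreducible (it is disconnected in the paper's own application), though your degree argument survives since some component of $F$ must have positive $D$-degree.
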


Residue maps offer a number of cases where the map extends to a morphism over the complement of the resonance hyperplane arrangement.

\begin{restatable}{theorem}{Fmuextremal}\label{thm:Fmuextremal}
Let $\mu=(a,n-4-a,(-1)^{n-2})$ for $1\leq a\leq n-5$. Then the residue map
$$r_\mu:\Mbar{0}{n}\dashrightarrow \PP^{n-4}$$
has projective general fibre, the numerical class of which forms an extremal moving curve denoted by $F_\mu$. 
\end{restatable}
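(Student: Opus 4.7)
The plan is to apply Theorem~\ref{thm:exrat} with $X=\Mbar{0}{n}$, $Y=\PP^{n-4}$, and $\pi=r_\mu$. Since $\Mbar{0}{n}$ is smooth projective (via Kapranov \cite{Kap2}), it is irreducible, normal, and $\QQ$-factorial, while $\PP^{n-4}$ is smooth projective. The dimension count $\dim\Mbar{0}{n}-\dim\PP^{n-4}=1$ gives the required relative dimension once dominance is established. Dominance of $r_\mu$ is immediate from either a direct construction of a smooth pointed rational curve realising a prescribed residue vector summing to zero, or from the fibrewise analysis of the $m=2$ residue map in \cite{CGPT24}.

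The heart of the proof is to exhibit a Zariski open $U\subseteq \PP^{n-4}$ over which $r_\mu$ restricts to a morphism with projective general fibre. Take $U = \PP^{n-4} \setminus \mathcal{A}$ for $\mathcal{A}$ the resonance hyperplane arrangement. The residue map lifts to a proper morphism $R_\mu:\MbarMu{0}{n}{\mu}\to\PP^{n-4}$ on the smooth multi-scale compactification of \cite{BCGGM}, and composes with the forgetful morphism $f:\MbarMu{0}{n}{\mu}\to \Mbar{0}{n}$ to give $r_\mu$ on the open locus. The key technical claim is that $R_\mu^{-1}(U)$ is disjoint from the multi-level boundary strata of $\MbarMu{0}{n}{\mu}$: any non-trivial level structure forces the residues of the marked simple poles to satisfy a subset-sum vanishing relation via the global residue condition of \cite{BCGGM}, which is exactly the defining equation of a hyperplane of $\mathcal{A}$. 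It follows that $R_\mu^{-1}(U)$ is contained in the locus where $f$ restricts to a finite birational morphism (in fact an isomorphism on each single-level component) onto its image $V\subseteq\Mbar{0}{n}$, and hence $r_\mu|_V:V\to U$ is a proper morphism whose general fibre is a complete projective curve in $\Mbar{0}{n}$. Theorem~\ref{thm:exrat} then yields extremality of the numerical class $F_\mu$ in $\movcurve(\Mbar{0}{n})$.

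The hard part is the global residue analysis. For signature $(a,n-4-a,(-1)^{n-2})$, one must classify the enhanced level graphs arising from degenerations of a genus-zero $n$-pointed curve and verify that every multi-level configuration produces a relation $\sum_{i\in S}r_i=0$ for some proper subset $S$, defining a hyperplane of $\mathcal{A}$. A subsidiary subtlety is the handling of single-level degenerations on nodal curves, which do not impose extra residue constraints but must be tracked to ensure that $R_\mu^{-1}(U)$ maps isomorphically to $V$ so that fibre completeness is preserved on descent to $\Mbar{0}{n}$. A final consistency check is to verify dominance more carefully than by pure dimension count, either via a Jacobian computation at a well-chosen basepoint or via the explicit residue realisations of \cite{CGPT24}.
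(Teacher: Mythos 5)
Your overall architecture matches the paper's: resolve $r_\mu$ through the multi-scale compactification (Lemma~\ref{lem:resolve}), take $U=\PP^{n-4}\setminus\mathcal{A}$, show the fibres over $U$ avoid the locus obstructing Theorem~\ref{thm:exrat}, and conclude. However, your key technical claim is false as stated, and its justification invokes the wrong mechanism. First, the global residue condition of \cite{BCGGM} is vacuous in genus zero (the paper records this in an explicit remark); the constraints actually come from the ordinary residue theorem applied to the differential on each top-level vertex (whose nodes are zeros, not poles), together with the convention that a marked pole lying below the top level is assigned residue zero. Second, and more seriously, it is not true that the fibres of the resolved map over $U$ are disjoint from the multi-level boundary: consider the two-level, two-vertex enhanced level graph with all $n-2$ simple poles on the top vertex and both zeros $p_1,p_2$ on the bottom vertex (so $\kappa(e)=n-3$). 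The only residue relation it imposes is that the residues of \emph{all} the poles sum to zero, which holds identically and cuts out no hyperplane of $\mathcal{A}$; this boundary divisor dominates $\PP^{n-4}$ and maps onto $\delta_{\{1,2\}}$. Indeed the general fibre \emph{must} meet the boundary, since $\mathcal{M}_{0,n}$ is affine and contains no complete curves. Had you carried out the ``hard part'' you describe --- verifying that every multi-level configuration forces a proper-subset resonance relation --- you would have run into this counterexample and the argument would collapse.

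What is actually needed, and what the paper proves (Lemma~\ref{lem:noexcept}), is the weaker statement that no component of the \emph{exceptional locus} of $\phi$ --- equivalently, no boundary divisor whose level graph has at least three vertices --- dominates $\PP^{n-4}$. The proof is a short stability count: to dominate, all poles must sit on a single top-level vertex with no imposed residue relation, leaving only the zeros to populate the remaining (at least two) lower-level vertices, each of which needs at least two marked points. This suffices because the only role of the openness hypothesis in Theorem~\ref{thm:exrat} is to guarantee that the general fibre $\widetilde{F}_\mu$ has zero intersection with the $\phi$-exceptional divisors; the non-contracted two-vertex boundary divisors may, and do, meet the fibre harmlessly. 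Your consequent assertion that the resolution restricts to an isomorphism over $U$ onto its image should be weakened accordingly: over the two-vertex boundary divisors $\phi$ is finite of degree $\kappa_e$ (counting prong matchings) rather than injective, though this does not affect the pushforward argument.
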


To identify a non-polyhedral extremal ray, we restrict to a specific signature and compute the rank (Definition~\ref{defn:rank}) of the moving curve explicitly. The forgetful morphism implies that non-polyhedrality for $\Mbar{0}{n}$ implies non-polyhedrality for all $\Mbar{0}{n'}$ with $n'\geq n$ 
and it can be shown the extremal moving curves identified here are polyhedral for $n=7$. Hence we restrict to $n=8$ and let $\mu=(2^2,-1^6)$. 

The proof Theorem~\ref{thm:Fmuextremal} actually relies only on the Hodge bundle compactification which agrees with the multi-scale compactification over the complement of the resonance hyperplane arrangement. However, from here the utility of the new multi-scale compactification comes into full view as it resolves the residue map over the full boundary and streamlines what would have been a particularly tedious task of computing the rank of the moving curve of interest. By Lemma~\ref{lem:resolve} the multi-scale compactification resolves the residue map $r_\mu$ as
$$(\phi,{\pi}): \Mbarmu{0}{8}\longrightarrow \Mbar{0}{8}\times \PP^{4}. $$
We begin by computing the rank of the curve class $\widetilde{F}_\mu$ general fibre of $\pi$ which projects to give $F_\mu=\phi_*\widetilde{F}_\mu$.  That is, the rank of the vector space
$$\RR\otimes\{D\in \Effbar(\Mbarmu{0}{8}) |\pi_*D=0  \}.$$
Debarre, Jiang, and Voisin have shown~\cite[Theorem 5.1]{DJV13} that for surjective morphisms, the $\RR$-vector spaces generated by effective and pseudo-effective cycles in $\ker(\pi_*)$ coincide for curves and divisors and further conjecture the validity of the analogous statement in all other dimensions. In \S\ref{sec:Stein} we compute the Stein factorisation of $\pi$, while in Proposition~\ref{prop:rankup} we carefully compute the rank of the curve $\widetilde{F}_\mu$. This is done by considering first the effective divisors supported on the pullback of the resonance hyperplane arrangement, then using the Stein factorisation to extend the computation across $\Mbarmu{0}{8}$. Pushing this result forward to $\Mbar{0}{8}$ we obtain the following.

\begin{restatable}{theorem}{corank}
\label{thm:corank}
Let $\mu=(2^2,-1^6)$, the rank of $\RR\otimes\{D\in \overline{\Eff}(\Mbar{0}{8}) |F_\mu\cdot D=0  \}$ is $\rho(\Mbar{0}{8})-6.$
\end{restatable}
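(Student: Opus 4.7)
The plan is to descend the rank computation of Proposition~\ref{prop:rankup} from $\Mbarmu{0}{8}$ to $\Mbar{0}{8}$ along the birational contraction $\phi$ in the resolved factorization $(\phi,\pi)\colon \Mbarmu{0}{8} \to \Mbar{0}{8}\times\PP^4$ of Lemma~\ref{lem:resolve}. The starting point is the projection formula: since $\phi_*\widetilde{F}_\mu = F_\mu$,
$$F_\mu \cdot D \;=\; \widetilde{F}_\mu \cdot \phi^*D$$
for every divisor class $D$ on $\Mbar{0}{8}$. Injectivity of $\phi^*$ on N\'eron--Severi groups, together with the preservation of effectivity under pullback, embeds the span of pseudo-effective divisors on $\Mbar{0}{8}$ perpendicular to $F_\mu$ into the analogous subspace on $\Mbarmu{0}{8}$.

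Since $\phi$ is birational, one has the decomposition
$$N^1(\Mbarmu{0}{8})_\RR \;=\; \phi^*N^1(\Mbar{0}{8})_\RR \;\oplus\; \langle E_1,\dots,E_k\rangle,$$
where $E_1,\dots,E_k$ are the $\phi$-exceptional prime divisors and $k = \rho(\Mbarmu{0}{8}) - \rho(\Mbar{0}{8})$. Each $E_i$ is effective, and I would next argue that a general fibre of $\pi$ is disjoint from every $E_i$: indeed $\phi(E_i)$ has positive codimension in $\Mbar{0}{8}$, so a one-parameter family sweeping out a dense open subset of $\Mbarmu{0}{8}$ meets $E_i$ only in codimension at least two, and hence a generic member avoids it entirely. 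Consequently $\widetilde{F}_\mu \cdot E_i = 0$ for all $i$, and any pseudo-effective $D'\in\widetilde{F}_\mu^\perp$ can be written as $D' = \phi^*\phi_*D' + \sum a_iE_i$ with $\phi_*D' \in F_\mu^\perp \cap \Effbar(\Mbar{0}{8})$. This yields the splitting
$$\text{span}\bigl(\widetilde{F}_\mu^\perp \cap \Effbar(\Mbarmu{0}{8})\bigr) \;=\; \phi^*\,\text{span}\bigl(F_\mu^\perp \cap \Effbar(\Mbar{0}{8})\bigr) \;\oplus\; \langle E_1,\dots,E_k\rangle,$$
the sum being direct because $\phi^*$ is injective and the $E_i$ are independent in the cokernel.

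Taking dimensions in this splitting, the rank of $F_\mu$ on $\Mbar{0}{8}$ equals the rank of $\widetilde{F}_\mu$ on $\Mbarmu{0}{8}$ supplied by Proposition~\ref{prop:rankup} minus $k = \rho(\Mbarmu{0}{8}) - \rho(\Mbar{0}{8})$, which simplifies to the claimed value $\rho(\Mbar{0}{8}) - 6$.

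The main obstacle is pinning down the exceptional locus of $\phi$ precisely enough to verify both that the $E_i$ assemble into the claimed direct summand and that a generic fibre of $\pi$ avoids each of them. This requires an explicit inventory of which enhanced level graphs for $\mu=(2^2,-1^6)$ parameterise boundary divisors of $\Mbarmu{0}{8}$ contracted by the forgetful morphism to $\Mbar{0}{8}$, drawn from the description of the multi-scale compactification recalled in \S\ref{sec:multi-scale}. Once these geometric inputs are in hand, the descent itself is a formal consequence of the projection formula and Proposition~\ref{prop:rankup}.
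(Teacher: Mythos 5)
Your argument is essentially the paper's: the paper proves Theorem~\ref{thm:corank} in one line from the splitting $\Pic(\Mbarmu{0}{8})=\phi^*\Pic(\Mbar{0}{8})\oplus\exc(\phi)$, the identity $\phi_*\widetilde{F}_\mu=F_\mu$, and the vanishing $\widetilde{F}_\mu\cdot D_\Gamma=0$ for every $\phi$-exceptional divisor, exactly the three ingredients you assemble. The one step you justify incorrectly is that vanishing: the fact that $\phi(E_i)$ has codimension at least two in $\Mbar{0}{8}$ is not what makes a general fibre of $\pi$ miss $E_i$ (a covering family of curves can perfectly well meet a fixed divisor, or even all pass through a fixed codimension-two locus); what is needed is that $\pi(E_i)\subsetneq\PP^{4}$, i.e.\ that no exceptional component dominates the residue space. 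This is precisely Lemma~\ref{lem:noexcept}, proved by the combinatorial inventory of enhanced level graphs you correctly identify as the missing input, so the gap is filled by a citation rather than new work; note also that the descent should invoke the pseudo-effective version Proposition~\ref{prop:rankuppseudo} (via Theorem~\ref{thm:DJV}) rather than Proposition~\ref{prop:rankup} itself.
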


Hence the extremal ray $F_\mu$ of the cone of moving cures is not polyhedral and dually:
\begin{restatable}{theorem}{MOn}
\label{thm:nonpoly}
$\Effbar(\Mbar{0}{n})$ is not polyhedral for $n\geq 8$.
\end{restatable}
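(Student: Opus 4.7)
The plan is to combine Theorems \ref{thm:Fmuextremal} and \ref{thm:corank} with the standard duality between $\Effbar(\Mbar{0}{n})$ and $\movcurve(\Mbar{0}{n})$, and then to propagate non-polyhedrality from $n=8$ up to all $n\geq 9$ via the forgetful morphism.

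For the base case $n=8$, I take $\mu=(2^2,-1^6)$. Theorem \ref{thm:Fmuextremal} produces an extremal ray $F_\mu$ of the moving cone $\movcurve(\Mbar{0}{8})$, and Theorem \ref{thm:corank} asserts that the linear span of the effective divisors numerically orthogonal to $F_\mu$ has dimension $\rho(\Mbar{0}{8})-6$. Assume for contradiction that $\Effbar(\Mbar{0}{8})$ is polyhedral. Then its dual $\movcurve(\Mbar{0}{8})$ is polyhedral as well, and by the standard bijection between faces of a polyhedral cone and faces of its dual, the extremal ray $F_\mu$ is cut out by a facet $F_\mu^\perp\cap\Effbar(\Mbar{0}{8})$ of codimension one. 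In particular, effective divisors orthogonal to $F_\mu$ span a subspace of dimension $\rho(\Mbar{0}{8})-1$, contradicting Theorem \ref{thm:corank}.

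For $n\geq 9$, consider a forgetful morphism $\pi:\Mbar{0}{n}\to\Mbar{0}{8}$. Pullback $\pi^*$ on real N\'eron--Severi spaces is injective and sends pseudo-effective classes to pseudo-effective classes. A section $s:\Mbar{0}{8}\to\Mbar{0}{n}$ of $\pi$ is obtained by attaching a rational bubble at the marking $p_1$ carrying the $n-8$ forgotten markings together with a relabelled copy of $p_1$; the bubble carries $n-7\geq 3$ special points and is stable, and forgetting the added markings destabilises the bubble and contracts it to recover the original curve. Using this section, together with the fact that $s^*$ preserves pseudo-effectivity, one obtains the identification
\[
\pi^*\Effbar(\Mbar{0}{8}) = \Effbar(\Mbar{0}{n})\cap \pi^*\bigl(N^1(\Mbar{0}{8})_\RR\bigr).
\]
Since the intersection of a polyhedral cone with a linear subspace is polyhedral, polyhedrality of $\Effbar(\Mbar{0}{n})$ would force polyhedrality of $\Effbar(\Mbar{0}{8})$, contradicting the base case.

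The substantive content is entirely packaged into Theorems \ref{thm:Fmuextremal} and \ref{thm:corank}; the deduction of Theorem \ref{thm:nonpoly} itself is just convex-geometric duality combined with the standard functoriality of the forgetful morphism. The only mild subtlety is the pullback identification in the induction step, for which the existence of an algebraic section of $\pi$ is the essential ingredient.
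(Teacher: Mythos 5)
Your proposal is correct and takes essentially the same route as the paper: the paper also deduces the $n=8$ case by combining Theorem~\ref{thm:Fmuextremal} and Theorem~\ref{thm:corank} with the Minkowski--Weyl duality statement (recorded as the unnamed theorem after Definition~\ref{defn:rank} in \S\ref{sec:biratgeom}, that polyhedrality of $\Effbar(X)$ forces every extremal moving curve to have rank $\rho(X)-1$), and then propagates to $n\geq 9$ via the forgetful morphism. One small caveat: the fact you actually need in the induction step is that $\pi^*E$ pseudo-effective implies $E$ pseudo-effective, which holds by surjectivity of $\pi$ alone (lift movable curve classes and apply duality on the base); the blanket claim that ``$s^*$ preserves pseudo-effectivity'' is not literally true for restriction to a boundary stratum, though it is harmless here since you only apply it to classes pulled back from $\Mbar{0}{8}$.
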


This result has the immediate corollary.

\begin{restatable}{corollary}{MDS}
\label{cor:MDS}
$\Mbar{0}{n}$ is not a Mori Dream Space for $n\geq 8$.
\end{restatable}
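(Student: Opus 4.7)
The plan is to apply Theorem~\ref{thm:nonpoly} via the contrapositive of a standard fact from Hu and Keel's foundational work: every Mori Dream Space has a rational polyhedral pseudo-effective cone of divisors. Concretely, by~\cite[Proposition 1.11]{HK00}, the pseudo-effective cone of a Mori Dream Space $X$ admits a finite decomposition into Mori chambers, each rational polyhedral, and in particular $\Effbar(X)$ itself is rational polyhedral. Equivalently, finite generation of the Cox ring (the characterisation cited in the introduction via~\cite[Prop 2.9]{HK00}) forces only finitely many extremal rays on the effective cone.

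First I would verify that the Hu-Keel framework applies to the moduli space at hand: $\Mbar{0}{n}$ is irreducible, normal, $\QQ$-factorial, and projective, so no additional hypotheses need to be checked. Then I would invoke Theorem~\ref{thm:nonpoly}, which asserts that $\Effbar(\Mbar{0}{n})$ fails to be polyhedral for every $n \geq 8$. If $\Mbar{0}{n}$ were a Mori Dream Space for some such $n$, then the Hu-Keel result would force $\Effbar(\Mbar{0}{n})$ to be rational polyhedral, directly contradicting Theorem~\ref{thm:nonpoly}. There is no genuine obstacle in this step; the entire argument is a one-line contrapositive once Theorem~\ref{thm:nonpoly} has been established, which is precisely why the author flags the statement as an immediate corollary. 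The substantive difficulty has already been absorbed into the construction of the non-polyhedral extremal moving curve $F_\mu$ and the rank computation of Theorem~\ref{thm:corank}.
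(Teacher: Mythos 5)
Your proposal is correct and matches the paper's own argument: the paper likewise deduces the corollary in one line from the fact that a finitely generated Cox ring (equivalently, the Mori Dream Space property) forces a polyhedral pseudo-effective cone, contradicting Theorem~\ref{thm:nonpoly}. The extra verification of irreducibility, normality, $\QQ$-factoriality, and projectivity is harmless but not needed for the contrapositive.
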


We note here that this range is exactly the range where the anticanonical divisor is not pseudo-effective and with the recent result~\cite{FM} now includes the first example, $n=8$, where the F-conjectures is known to hold, and the pseudo-effective cone is not polyhedral (and hence the MDS property also fails).{{}}

The above constructions have a side effect of identifying a number of interesting divisor classes we refer to as the \emph{resonance transform divisors}. For $\mu=(a_1,\dots,a_m,(-1)^{n-m})$ with $a_i>0$ and $\sum a_i=n-m-2$, in Definition~\ref{def:resdiv} we define for any $S\subset\{0,\dots,n-m-2\}$, the resonance transform divisor as $D^\mu_S:=\phi_*\widetilde{H}_S$ in $\Mbar{0}{n}$ where $\widetilde{H}_S$ in $\Mbarmu{0}{n}$ defined as the components of $\pi^*H_S$ that intersect the interior of the moduli space. In the final section of the paper we compute the class of these divisors in Proposition~\ref{prop:divclass}, and prove rigidity and extremality via an inductive argument on $m$ with the base case of $m=2$ proven by the construction of covering curves of the divisor with negative intersection.

\begin{restatable}{theorem}{extremaldivisors}
\label{prop:extremaldivisors}
For $\mu=(a_1,\dots,a_m,(-1)^{n-m})$ with $m\geq2$, $a_i>0$ and $\sum a_i=n-m-2$, the resonance transform divisor $D_S^\mu$ is rigid and extremal in $\Mbar{0}{n}$.
\end{restatable}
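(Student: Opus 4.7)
The plan is to argue by induction on $m \geq 2$, with the underlying mechanism being the standard observation that an irreducible divisor $D$ covered by a family of irreducible curves $\{C_t\}$ satisfying $C_t\cdot D < 0$ is both rigid and extremal in the pseudo-effective cone. For the base case $m=2$, the signature $\mu=(a_1,a_2,-1^{n-2})$ falls under Theorem~\ref{thm:Fmuextremal}, and the resolution described in Lemma~\ref{lem:resolve} furnishes a genuine morphism $\pi:\Mbarmu{0}{n}\to \PP^{n-4}$ of relative dimension one. The strict transform $\widetilde H_S$ of the resonance hyperplane $H_S$ is then fibred over $H_S\cong\PP^{n-5}$ with one-dimensional general fibre, and the fibres $C_t$ of $\pi|_{\widetilde H_S}$ sweep out $\widetilde H_S$. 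Their images $\phi(C_t)$ form a covering family of $D_S^\mu=\phi_*\widetilde H_S$. Writing $\pi^*H_S=\widetilde H_S+E$ with $E$ supported on the multi-scale boundary, and $\phi^* D_S^\mu=\widetilde H_S+F$ with $F$ supported on the $\phi$-exceptional locus, the projection formula gives
\[
\phi(C_t)\cdot D_S^\mu \;=\; -C_t\cdot E \;+\; C_t\cdot F,
\]
since $\pi(C_t)$ is a single point. A direct analysis, using the explicit class from Proposition~\ref{prop:divclass} together with the classification of multi-scale boundary strata, is expected to show that this quantity is strictly negative, yielding the base case.

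For the inductive step, assume the result for all signatures with $m-1$ zeros and let $\mu=(a_1,\ldots,a_m,-1^{n-m})$ with $m\geq 3$. Consider the clutching morphism $\xi:\Mbar{0}{n-1}\to \Mbar{0}{n}$ attaching a rational tail carrying the last two zero markings $p_{m-1}, p_m$; the compatible signature on the main component is $\mu'=(a_1,\ldots,a_{m-2},a_{m-1}+a_m,-1^{n-m})$ with $m-1$ zeros. By hypothesis, $D_S^{\mu'}$ is rigid and extremal on $\Mbar{0}{n-1}$, and in particular carries a covering family $C'_s$ with $C'_s\cdot D_S^{\mu'}<0$. Decomposing $\xi^* D_S^\mu$ into its principal component (which is $D_S^{\mu'}$) and boundary contributions supported off the $C'_s$, and applying the projection formula, one concludes $\xi_*(C'_s)\cdot D_S^\mu<0$. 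Since $\xi_*(C'_s)$ covers $D_S^\mu$, the covering-curves principle closes the induction.

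The main obstacle is the intersection computation in the base case: one must explicitly identify the exceptional contributions $E$ and $F$, compute their intersections with the fibre curves $C_t$, and verify that the $-C_t\cdot E$ term dominates $C_t\cdot F$. This is precisely the step that genuinely requires the resolution of the residue map via the multi-scale compactification, as the Hodge bundle compactification fails to resolve $r_\mu$ over the resonance hyperplane arrangement. The inductive step then reuses this boundary analysis in a more routine form, controlling the non-principal components of $\xi^* D_S^\mu$ so that the negativity inherited from $D_S^{\mu'}$ persists on $\Mbar{0}{n}$.
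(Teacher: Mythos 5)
Your base case follows the same covering-curve strategy as the paper: the fibres of $\pi$ over points of $H_S$ sweep out $\widetilde H_S$, and $C_t\cdot\widetilde H_S=-C_t\cdot E$ follows from $C_t\cdot\pi^*H_S=0$. But you defer the decisive point, namely that $-C_t\cdot E+C_t\cdot F$ is actually negative, as something ``expected''; that is exactly where the content lies. The paper proves it by exhibiting an explicit two-level graph contributing to the boundary part of the degenerate fibre and invoking the smoothing criterion of \cite[Prop 4.2]{CMZ22} to show that this boundary component meets $\widetilde H_S$ positively (forcing the interior part to meet it negatively), and it separately shows that the $\phi$-exceptional correction in $\phi^*D_S^\mu=\widetilde H_S+\sum a_{\Gamma,S}D_\Gamma$ contributes nothing against the covering curve. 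Without that analysis the base case is a plan rather than a proof, though the skeleton is the right one.

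The inductive step, however, contains a genuine error. The curves $\xi_*(C'_s)$ lie inside the boundary divisor $\xi(\Mbar{0}{n-1})=\delta_{\{m-1,m\}}$, so they can sweep out at most $D_S^\mu\cap\delta_{\{m-1,m\}}$, a codimension-two locus; they do not cover $D_S^\mu$, which meets the interior of $\Mbar{0}{n}$. The covering-curves principle therefore does not apply, and no conclusion about rigidity or extremality of $D_S^\mu$ follows. Compounding this, the inductive hypothesis (rigidity and extremality of $D_S^{\mu'}$) does not furnish a covering family with negative intersection --- that implication goes the wrong way, and for $m-1\geq 3$ no such family has been constructed anywhere in your argument. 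The paper's induction is structured quite differently: it restricts to $\delta_{\{1,2\}}\cong\Mbar{0}{n-1}$, where $D_S^\mu$ restricts to $D_S^{\mu'}$ with $\mu'=(a_1+a_2,a_3,\dots,a_m,(-1)^{n-m})$, uses the exact sequence
\begin{equation}
0\longrightarrow \OO_{\Mbar{0}{n}}(kD_S^\mu-\delta_{\{1,2\}})\longrightarrow \OO_{\Mbar{0}{n}}(kD_S^\mu)\longrightarrow \OO_{\delta_{\{1,2\}}}(kD_S^{\mu'})\longrightarrow 0
\end{equation}
together with $\dim H^0(kD_S^{\mu'})=1$ from the inductive hypothesis, and then kills $H^0(kD_S^\mu-\delta_{\{1,2\}})$ by producing a moving curve $B=\widetilde F_\mu\cdot A^{m-2}$ with $B\cdot\phi^*D_S^\mu=0$ and $B\cdot\phi^*\delta_{\{1,2\}}>0$, so that $kD_S^\mu-\delta_{\{1,2\}}$ is not pseudo-effective. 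You would need to replace your clutching argument with a cohomological restriction argument of this kind, or else genuinely construct covering curves of $D_S^\mu$ itself for every $m$.
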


In \S\ref{sec:flat} we provide the background on flat geometry and the strata of differentials including the residue map. In \S\ref{sec:biratgeom} we introduce and prove the required results on extremal moving curves, morphisms, and birational geometry. In \S\ref{sec:resolve} we introduce the multi-scale compactification that provides a resolution of the residue maps of interest and provide the Stein factorisation of the resolved map in the case that $\mu=(2^2,-1^6)$. In \S\ref{sec:effcone} we provide proofs of all main theorems.
\\
\\
\textbf{Acknowledgements.} I am very grateful to Fred Benirschke, Brian Lehmann, and Dawei Chen for stimulating discussions on topics related to this paper. This project was supported by DECRA Grant DE220100918 from the Australian Research Council.

\section{Flat geometry and the strata of differentials}\label{sec:flat}
The stratum of differentials $\HH(\mu)$ for $\mu=(m_1,\dots,m_n)\in \ZZ^n$ with $\sum m_i=-2$, parameterises pairs $([C,p_1,\dots,p_n],\eta)$ of a smooth $n$-pointed rational curve and a non-zero differential $\eta$ such that $\text{div}(\eta)=\sum_{i=1}^nm_ip_i$. The scheme or stack structure of $\HH(\mu)$ is inhertited as a substack of the appropriately twisted Hodge bundle and the dimension of $\HH(\mu)$ is $n-2$. Clearly, the stratum $\HH(\mu)$ is simply a $\CC^*$-bundle over $\mathcal{M}_{0,n}$
 with fibres obtained by scaling the differental. However, the utility of this perspective is in the consideration of \emph{period coordinates} that provide local orbifold coordinates for the strata and hence also $\mathcal{M}_{0,n}$. {{}}

The differential $\eta$ induces a flat metric on the Riemann surface punctures at the poles $P=\{p_i\mid m_i<0\}$ with conical singularities of angle $2\pi(m_i+1)$ at the zeros $Z=\{p_i\mid m_i\geq 0\}$. Further, considering a basis for the relative homology $\gamma_1,\dots,\gamma_{n-2}\in H_1(C\setminus P,Z,\ZZ)$ we obtain local complex analytic coordinates for $\HH(\mu)$ by integration of $\eta$
\begin{eqnarray}  
&\HH(\mu)\longrightarrow\AAA^{n-2} &\\
&([C,p_1,\dots,p_n],\eta)\mapsto \left(\int_{\gamma_{1}}\eta,\dots,\int_{\gamma_{n-2}}\eta\right)&
\end{eqnarray}

The residues at the poles form a subset of period coordinates that are defined globally. The residue map denoted 
$$r_\mu:\HH(\mu)\longrightarrow \mathcal{R}\subset \AAA^r$$ 
where $r$ is the number of $m_i<0$, returns the residues of $\eta$ at the $p_i$ such that $m_i<0$. {{}}

The image $\mathcal{R}$ of $r_\mu$ for all $\mu$ was determined in \cite{GT21}. The \emph{resonance hyperplane} $H_S$ in $\AAA^r$ for any nonempty $S\subsetneqq[r]$ is defined as 
$$H_S:=\{(x_1,\dots,x_r)\in\mathcal{R}\mid \sum_{j\in S}x_i=0 \}$$
and define the union over all such $S$ to be the \emph{resonance hyperplane arrangement} in $\mathcal{R}$.{{}}

When only one $m_i$ is non-negative, $r_\mu$ becomes finite onto its image and a number of recent works \cite{GT21,GT22,CP23} have investigated this case via flat and algebro-geometric methods. The map is finite and is ramified over the resonance hyperplane arrangement. The authors then combined in \cite{CGPT24} to consider the case where two $m_i$ are non-negative and $r_\mu$ has one dimensional fibres. The generic fibre $F_\lambda$ is a complex curve with closure $\overline{F}_\lambda$ in the multi-scale compactification (presented in~\S\ref{sec:multi-scale}), a possibly disconnected compact curve. In any such fibre, the relative period between the zeros, say $z$, gives a differential $dz$ that as the residues are fixed, is independent of the choice of basis and hence establishes a canonical translation structure $\omega_\lambda$ on the fibre $\overline{F}_\lambda$. This induces a vector bundle with fibres   $\CC\otimes H_1(\overline{\F}_\lambda\setminus P_{\omega_\lambda},Z_{\omega_\lambda},\ZZ)$ over $\lambda\in \mathcal{R}$ endowed with a connection over the residue space that acts trivially locally on the fibres known as the \emph{Gauss-Manin connection}. The authors provide a classification of the connected components of the generic fibre of $r_\mu$ when fibres are positive dimensional. The invariants are related to the topological invariants of strata of translation and dilation surfaces of higher genus obtained via a certain consistent surgery on the flat surfaces. 

\begin{theorem}[Theorem 1.7 \cite{CGPT24}]\label{thm:conncomp}
The generic fibres of $r_\mu$ are connected except for the following two families of strata. Let $a_1,\dots,a_m,b_1,\dots, b_p$ be positive integers with $m\geq 2$, then
\begin{enumerate}
\item
$\HH (ka_1,\dots,ka_m,-kb_1,\dots,-kb_{p-2},(-1)^2)$ for some $k \geq 2$ and $a_1,\dots,a_n,b_1,\dots,b_{p-2}$ positive coprime integers, in which case the generic fibres have $k$ connected components;
\item
$\HH (2a_1,\dots,2a_m,-2b_1,\dots,-2b_{p-2t},(-1)^{2t})$ with $t\geq  2$, where generic fibres have two connected components.
\end{enumerate}
\end{theorem}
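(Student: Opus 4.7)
The plan is to analyse the generic fibre of $r_\mu$ via the flat geometry induced by the differential, and then to isolate discrete invariants that partition the connected components. Fix a generic residue vector $\lambda$ in the complement of the resonance hyperplane arrangement. By the construction already recalled, the generic fibre $\overline{F}_\lambda$ carries a canonical translation structure $\omega_\lambda$ coming from the well-defined relative period between the two zeros, and moving continuously within $\overline{F}_\lambda$ corresponds to deforming the flat surface while keeping all absolute periods around the poles (the residues) fixed. The central question thus becomes which such deformations can be joined to each other by a path that stays inside a single fibre.

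First I would introduce a family of surgery operations on the flat surface --- cylinder shears, slit rotations, breaking of zeros, and cut-and-paste moves between cone points --- each of which preserves the residues and therefore produces a path inside the fibre. By an argument in the spirit of the Kontsevich--Zorich--Lanneau classification of connected components of strata of abelian differentials, this reduces connectedness to understanding the action of these operations on a short list of discrete topological invariants of the pair $(\overline{F}_\lambda,\omega_\lambda)$.

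Next I would identify the obstructions to connectedness. The first arises from ramified cyclic covers: when the zero orders $a_i$ and the non-simple pole orders $b_j$ share a common factor $k\geq 2$ while the remaining poles are exactly two simple ones, any flat surface in the fibre factors through a $k$-fold cyclic cover of a lower-dimensional stratum, and the choice of sheet labels yields $k$ distinct deformation classes --- this recovers family $(1)$. The second is a parity invariant: when all orders are even and there are $2t\geq 4$ simple poles, the associated flat surface carries a $\ZZ/2$ invariant analogous to the Arf invariant of a quadratic refinement of the intersection form on relative homology, which separates exactly two components and yields family $(2)$. In each case I would verify that the invariant is locally constant along the fibre by checking it is preserved by the surgery operations above.

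The hardest step will be completeness: showing that outside the two exceptional families the surgery operations act transitively on flat structures with fixed residues, and that inside each family they act transitively on each level set of the invariant. I would approach this by performing a horizontal cylinder decomposition of a generic flat surface, using the surgeries to move between different combinatorial types, and inducting on the total order of zeros and non-simple poles to reduce to a small number of base cases that can be verified by hand. The delicate part is ruling out further hidden invariants --- especially in the presence of thin cylinders whose circumference nearly lies on the resonance locus --- and guaranteeing that the necessary surgeries can be realised by paths that avoid the resonance hyperplane arrangement so that the limiting structure remains a generic fibre.
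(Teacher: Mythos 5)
This statement is Theorem~1.7 of \cite{CGPT24}; the present paper cites it and gives no proof of its own, so there is no internal argument to compare against. Judged on its own terms, your proposal is a proof \emph{plan} rather than a proof, and the plan has two genuine gaps. First, the invariants are asserted rather than constructed. For family (1) your proposed mechanism --- that the flat surface ``factors through a $k$-fold cyclic cover of a lower-dimensional stratum'' whose sheet labels give $k$ components --- does not work as stated: the underlying curve is $\PP^1$, which admits no nontrivial unramified covers, and divisibility of the orders by $k$ does not by itself produce a cover of the fibre. The actual source of the invariants in \cite{CGPT24} is a surgery performed on the flat surface (gluing the two simple poles into a handle in case (1), pairing the $2t$ simple poles in case (2)) that produces a translation or dilation surface of higher genus; the components of the isoresidual fibre are then controlled by the known discrete invariants of the resulting higher-genus strata (a rotation-number/torsion invariant in the $k$-divisible case, a spin parity in the even case). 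Your ``Arf invariant'' guess for family (2) is in the right spirit, but you would still need to define it on the fibre itself, prove it is locally constant, and show it takes exactly two (resp.\ $k$) values --- none of which is done.

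Second, and more seriously, the completeness step --- that outside the two exceptional families the surgeries act transitively, and that within each family they act transitively on each level set of the invariant --- is where essentially all of the mathematical content of the theorem lives, and your proposal explicitly defers it. An appeal to ``cylinder decompositions and induction on the total order'' is not an argument until the base cases are identified, the inductive reduction is specified, and the possibility of hidden invariants is excluded; the Kontsevich--Zorich classification you invoke as a model required exactly this kind of sustained combinatorial analysis. You also need to justify why genericity of the residue vector (avoidance of the resonance arrangement) guarantees that the required surgery paths stay inside a single fibre; the remark about ``thin cylinders'' gestures at the issue without resolving it. As it stands the proposal identifies the right circle of ideas but does not establish the statement.
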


By observing that the final residue is determined by the first $r-1$, the map descends  modulo the action of $\CC^*$ to a map 
$$r_\mu:\HH(\mu)/\CC^*\cong \M{0}{n}\dashrightarrow \PP^{r-2}.$$
Further, if $\underline{0}\notin\mathcal{R}$ then $r_\mu$ is well-defined everywhere. {{}}

Similarly to the unprojectivised case, for any non-empty $S\subset \{0,1,\dots,r-2\}$ we define the \emph{resonance hyperplane} $H_S$ by the equation $\sum_{j\in S}X_j$ and their union
$$\mathcal{A}:=\sum_{S\subset\{0,\dots,r-2\}}H_S$$
to be the \emph{resonance hyperplane arrangement} in $\PP^{r-2}$. Away from this locus, $r_\mu$ has been described in the discussion above. This map does not extend to a morphism over the Grothendieck-Knudsen compactification $\Mbar{0}{n}$. In the \S\ref{sec:resolve} we resolve this extension in the cases of interest to us, and determine the Stein factorisation of the resulting morphism for $\mu=(2^2,-1^6)$. It is the investigation of the rich structure of this map that provides the results of this paper.

\section{Morphisms and birational geometry}\label{sec:biratgeom}
We recall the following well known definitions (See for example, \cite{Laz}). Let $X$ be a normal projective irreducible variety, $\Cl(X)$ the divisor class group, and $\Pic(X)$ the Picard group of $X$. Let 
$\equiv$ denote numerical equivalence and denote the numerical equivalence classes of Cartier divisors by
$$\Num^1(X):=\Pic(X)\otimes\RR/\equiv  .$$ 
We similarly define $\Num_1(X)$ to be numerical equivalence classes of $\RR$-linear combinations of irreducible curves. It follows that the intersection product on these vector spaces is a perfect pairing and hence $\Num^1(X)$ and $\Num_1(X)$ form dual finite dimensional vector spaces.{{}}

We define the effective cone of divisors $\Eff(X)\subset \Num^1(X)$ to be the convex cone generated by numerical classes of effective Cartier divisors, and the pseudo-effective cone $\Effbar(X)$ to be the closure of this cone. We define $\Nef^1(X)\subset\Num^1(X)$ to be the cone generated by nef divisor classes and the moving cone of curves $\movcurve(X)\subset \Num_1(X)$ to be the closure of the cone generated by moving curve classes. The cones $\Effbar(X)$ and $\movcurve(X)$ are dual to each other (see~\cite{BDPP} for the smooth case, \cite[Thm 11.4.19]{Laz} in general).

An element of a convex cone is known as \emph{extremal} if it cannot be expressed as a sum of two non-proportional elements of the cone. An effective divisor $D$ is \emph{rigid} if $\dim H^0(D)=1$ and $\dim H^0(kD)=1$ for all positive integers $k$ and an irreducible rigid divisor $D$ is extremal in $\Eff(X)$. To the author's knowledge, it is not known if there exist irreducible rigid effective divisors that are not extremal in $\Effbar(X)$.

\begin{definition}\label{defn:polyhedral}
A convex cone $\mathcal{C}\subset\RR^m$ is known as \emph{polyhedral} if there are finitely many vectors $v_1,\dots, v_s\in\RR^m$ such that $\mathcal{C}=\RR_{\geq0}v_1\cdots \RR_{\geq0}v_s$. The cone is known as \emph{simplicial} if further $s=m$.
\end{definition}

The Minkowski-Weyl theorem for polyhedra simplifies in the case of cones stating that a polyhedral convex cone can be equivalently described by finitely many codimension one faces, or half-planes, corresponding to the finitely many extremal rays of the dual cone. We translate this to our setting as follows.

\begin{definition}\label{defn:rank}
The \emph{rank} of a moving curve $F\in \movcurve(X)$ is defined as the rank of 
$$\RR\otimes\{D\in \Effbar(X)\mid F\cdot D=0\}. $$
\end{definition}

\begin{theorem}
If $\Effbar(X)$ is polyhedral, then every extremal moving curve $F$ has rank $\rho(X)-1$. 
\end{theorem}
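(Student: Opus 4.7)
The plan is to recognize this as a direct consequence of the duality of polyhedral cones, together with the standard fact that $\Effbar(X)$ and $\movcurve(X)$ are full-dimensional dual cones in the paired vector spaces $\Num^1(X)_{\RR}$ and $\Num_1(X)_{\RR}$, both of dimension $\rho(X)$. Since $X$ is projective, the ample cone is nonempty and open, and lies in the interior of $\Effbar(X)$, so $\Effbar(X)$ has full dimension $\rho(X)$; dually, $\movcurve(X)$ has full dimension $\rho(X)$ as well.

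Assuming $\Effbar(X)$ is polyhedral, the Minkowski–Weyl theorem gives that $\movcurve(X) = \Effbar(X)^{\vee}$ is also polyhedral, and the usual correspondence between faces of a polyhedral cone and faces of its dual holds: faces of $\Effbar(X)$ are in order-reversing bijection with faces of $\movcurve(X)$. Under this bijection a face $\tau \subset \movcurve(X)$ corresponds to the face
\begin{equation}
\tau^{\perp} \cap \Effbar(X) \;=\; \{D \in \Effbar(X) \mid F \cdot D = 0 \text{ for all } F \in \tau\}
\end{equation}
of $\Effbar(X)$, and $\dim \tau + \dim(\tau^{\perp} \cap \Effbar(X)) = \rho(X)$.

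Applying this to an extremal ray $\tau = \RR_{\geq 0} F$ of $\movcurve(X)$, which has dimension $1$, the dual face is exactly $\{D \in \Effbar(X) \mid F \cdot D = 0\}$ and has dimension $\rho(X) - 1$. By definition this dimension is the rank of $F$, so $F$ has rank $\rho(X) - 1$ as claimed.

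The only subtle point is that extremality of $F$ is essential: for a non-extremal moving class the perpendicular face could be strictly contained in a facet, giving a strictly smaller rank. Polyhedrality is equally essential, since without it there is no facet–extremal-ray duality and $\{D : F \cdot D = 0\}$ may fail to span a hyperplane even when $F$ is extremal. Both hypotheses are used precisely to invoke the facet–ray correspondence, after which the statement reduces to the elementary dimension count above.
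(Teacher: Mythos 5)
Your proof is correct and follows exactly the route the paper intends: the theorem is stated there without a written proof, as an immediate consequence of the Minkowski--Weyl duality between facets of $\Effbar(X)$ and extremal rays of the dual cone $\movcurve(X)$, which is precisely the face correspondence you spell out. The only cosmetic point is that full-dimensionality of $\movcurve(X)$ follows from pointedness (not full-dimensionality) of $\Effbar(X)$, but your argument only actually uses that $\Effbar(X)$ is full-dimensional, so every extremal ray of its dual is a facet normal and the dimension count stands.
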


The following theorem gives a criterion for identifying extremal moving curve classes with the potential to be non-polyhedral. We start with the case of a morphism.

\begin{theorem}\label{thm:extremal}
Let $\pi:X\longrightarrow Y$ be a surjective morphism between irreducible normal $\QQ$-factorial projective varieties with relative dimension one. Then the numerical class of the general fibre of $\pi$ spans an extremal ray of $\movcurve(X)$, the cone of moving curves on $X$.
\end{theorem}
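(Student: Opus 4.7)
Plan: Extremality of the ray $\RR_{\geq 0}F$ in $\movcurve(X)$ is equivalent to the assertion that whenever $F \equiv F_1 + F_2$ with $F_1, F_2 \in \movcurve(X)\setminus\{0\}$, each $F_i$ lies in $\RR_{\geq 0}F$. The plan is to show first that both $F_i$ lie in $\ker(\pi_*\colon\Num_1(X)\to\Num_1(Y))$, then to characterise this kernel intersected with $\movcurve(X)$ as the single ray $\RR_{\geq 0}F$.

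For the easy half: for any ample class $A$ on $Y$, the pullback $\pi^*A$ is pseudo-effective (in fact nef, as a pullback of a base-point free class), and $F\cdot\pi^*A=0$ since $F$ is contracted by $\pi$. Moving-ness of $F_i$ gives $F_i\cdot\pi^*A\geq 0$, and the sum vanishing forces $F_i\cdot\pi^*A=0$. Varying $A$ over $\Num^1(Y)$ yields $\pi_*F_i=0$. By taking the Stein factorisation $\pi = g\circ f$ with $f$ of connected fibres and $g$ finite, the class of a general fibre of $\pi$ is $\deg(g)$ times that of $f$, so the rays coincide and I may assume $\pi$ has connected fibres.

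The key step is to deduce, for any $\gamma\in\movcurve(X)$ with $\pi_*\gamma=0$, that $\gamma$ is a non-negative multiple of $F$. A crucial observation is that $\gamma$ has vanishing intersection with every irreducible component of every fibre: for each divisor $B\subset Y$, writing $\pi^*B = \sum m_iD_i$ with $m_i>0$, one has $0 = \gamma\cdot\pi^*B = \sum m_i\,\gamma\cdot D_i$, and because $\gamma\cdot D_i\geq 0$ by moving-ness with $D_i$ effective, each $\gamma\cdot D_i$ vanishes. Thus $\gamma$ is orthogonal to the span of all vertical divisor classes in $\Num^1(X)$.

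To conclude, I would reduce to a surface slice. Let $B\subset Y$ be a very general complete intersection curve, and $S := \pi^{-1}(B)$; then $S$ is a normal $\QQ$-factorial projective surface and $\pi|_S\colon S\to B$ is a surjective morphism of relative dimension one with connected fibres and general fibre class $F|_S$ of zero self-intersection. Surface extremality follows from Hodge index: for any nef $N$ on $S$ with $N\cdot F|_S=0$, setting $\mu = (N\cdot H_S)/(F|_S\cdot H_S)$ for $H_S$ ample on $S$, the class $N-\mu F|_S$ satisfies $(N-\mu F|_S)\cdot H_S=0$ and $(N-\mu F|_S)^2 = N^2$, so Hodge index forces $N^2\leq 0$; combined with $N^2\geq 0$ from nefness, $N^2=0$ and the equality case in Hodge index gives $N=\mu F|_S$. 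The main obstacle I anticipate is transferring this surface statement back to $X$, since $\gamma$ and the $F_i$ need not be representable by curves contained in $S$. This transfer can be accomplished by an approximation argument: representing $\gamma = \lim_n\sum_k a_{k,n}[C_{k,n}]$ with $C_{k,n}$ irreducible moving, splitting into fibral contributions (which, by connected fibres, automatically have class $F$) and horizontal contributions (whose pushforwards to $Y$ form a sum of non-zero moving classes converging to zero in the salient cone $\movcurve(Y)$, and so pairing with an ample class of $Y$ forces the total weight of horizontal terms to vanish), and then bounding the horizontal contribution via intersections with a fixed ample divisor on $X$ using the vanishing of $\gamma$ on vertical divisors established above.
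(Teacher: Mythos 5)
Your reduction of extremality to the statement that $\movcurve(X)\cap\ker(\pi_*)$ is the single ray $\RR_{\geq 0}F$ is sound, and the first half (forcing $F_i\cdot\pi^*A=0$ for every ample $A$ on $Y$, hence $\pi_*F_i=0$) is correct and is also how the paper begins. The genuine gap is in the key step, and it is exactly the obstacle you flag yourself: the surface slice does not transfer back to $X$. The Hodge index computation proves that $F|_S$ is extremal in $\Nef(S)=\movcurve(S)$, but the decomposition $F\equiv F_1+F_2$ lives in $\Num_1(X)$; the classes $F_i$ (and a general $\gamma\in\movcurve(X)\cap\ker(\pi_*)$) are not represented by curves on $S$, and the maps $\Num_1(S)\to\Num_1(X)$ and $\Num^1(X)\to\Num^1(S)$ are in general neither injective nor surjective, so proportionality on $S$ carries no information about proportionality on $X$. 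The approximation argument does not repair this: pairing the horizontal part of $\sum_k a_{k,n}[C_{k,n}]$ with an ample class on $Y$ only shows that its \emph{pushforward} tends to $0$ in $\Num_1(Y)$; it does not bound its class in $\Num_1(X)$ away from a nontrivial limit, since a horizontal curve of bounded degree over $Y$ can have arbitrarily large degree against an ample divisor on $X$. Even after extracting a limit you are left with $\gamma=cF+h$ where $h$ is merely a pseudo-effective curve class in $\ker(\pi_*)$, and that cone is typically far larger than $\RR_{\geq0}F$ (it contains every component of every special fibre), so nothing forces $h$ onto the ray. Finally, the parenthetical claim that fibral irreducible moving curves ``automatically have class $F$'' by connectedness of fibres needs the general fibre to be irreducible, which Stein factorisation alone does not provide; the paper deals with the multi-component case by a separate monodromy/irreducibility argument.

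The missing idea --- and the only place the relative-dimension-one hypothesis really enters --- is relative bigness. Since $F_1\notin\RR F$ and both classes lie in the salient cone $\movcurve(X)$, one can choose $L\in\Num^1(X)$ with $F\cdot L>0$ and $F_1\cdot L<0$. Because the fibres are curves, $F\cdot L>0$ says precisely that $L$ is $\pi$-big, so by \cite[Lemma 3.23]{KM98} the divisor $D_m=L+m\pi^*A$ is big for $m\gg0$ and $A$ ample on $Y$; since $F_1\cdot\pi^*A=0$, this gives $F_1\cdot D_m=F_1\cdot L<0$, contradicting that a moving curve meets every big (hence pseudo-effective) class non-negatively. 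Your Hodge-index slice is a genuinely different idea, but without a mechanism of this kind acting directly on classes in $\Num^1(X)$, the proof does not close.
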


\begin{proof}
Let the class of the general fibre be denoted $F$. Then $F$ is clearly a moving curve as effective curves with class equal to $F$ cover a Zariski dense subset of $X$. If $F$ has multiple components, then they must all have the same class in $N_1(X)$. If they can be distinguished by their class, taking the closure of the subset of curves distinguished in this way would provide multiple components of $X$ and provide a contradiction to irreducibility. {{}}

If $F$ is not extremal then there exists a decomposition $F\equiv F_1+F_2$ where $F_1,F_2$ are moving curves and $F_1\ne kF_2$ for any $k\in\RR$. Then as the classes are not proportional, there exists a (necessarily not effective) divisor $L$ such that $F\cdot L>0$ and $F_1\cdot L<0$.{{}}

Hence $L$ is $\pi$-big, and by \cite[Lemma 3.23]{KM98}, for any $A$ ample on $Y$, the divisor $D_m=L+m\pi^*A$ is big for $m\gg0$. However, $F\cdot \pi^*A=0$ and hence $F_1\cdot\pi^*A=F_2\cdot \pi^*A=0$ as each $F_i$ is a moving curve and $\pi^*A$ is effective implying $F_i\cdot\pi^*A\geq0$. Hence $F_1\cdot D_m=F_1\cdot L<0$ for all $m$, contradicting the assumption that $F_1$ is a moving curve.
\end{proof}

The case of use to us is the rational case where the map extends to a morphism over a Zariski open in the target.

\exrat*

\begin{proof}
Let the class of the closure of the general fibre be denoted $F$. There exists a resolution of $(\phi,\tilde{\pi}):Z\longrightarrow X\times Y$ of $\pi:X\dashrightarrow Y$ by  a normal $\QQ$-factorial variety $Z$ and the curve class $\widetilde{F}$ given by the general fibre of $\tilde{\pi}$ pushes forward to give $\phi_*\widetilde{F}=F$ in $N_1(X)$. Further, the curve $F$ is a moving curve, as curves with class equal to $F$ cover a Zariski dense subset of $X$. 

If $F$ is not extremal then there exists a decomposition $F\equiv F_1+F_2$ where $F_1,F_2$ are moving curves and $F_1\ne kF_2$ for any $k\in\RR$. Then as the classes are not proportional, there exists a (necessarily not effective) divisor $L$ such that $F\cdot L>0$ and $F_1\cdot L<0$.

Observe $\widetilde{F}\cdot\phi^*L=\phi_*\widetilde{F}\cdot L=F\cdot L>0$ and hence $\phi^*L$ is $\tilde{\pi}$-big. Hence by~\cite[Lemma 3.23]{KM98}, for any $A$ ample on $Y$, the divisor $D_m=\phi^*L+m\tilde{\pi}^*A$ is big for $m\gg0$. But then $\phi_*D_m$ is big as the push forward of a big divisor under a birational morphism.

Finally, we observe that $0=\widetilde{F}\cdot \tilde{\pi}^*A=\widetilde{F}\cdot \phi^*\phi_*\tilde{\pi}^*A$ as $\widetilde{F}$ has zero intersection with the exceptional locus of $\phi$. Hence 
$$0=\phi_*\widetilde{F}\cdot \phi_*\tilde{\pi}^*A=F\cdot  \phi_*\tilde{\pi}^*A$$
and as $\tilde{\pi}^*A$ is effective, this implies that also $0=F_1\cdot  \phi_*\tilde{\pi}^*A=F_2\cdot   \phi_*\tilde{\pi}^*A$ as both are assumed to be moving curves hence $F_i\cdot\phi_*\tilde{\pi}^*A\geq 0$ with sum equal to zero as $F\cdot\phi_*\tilde{\pi}^*A =0$. Hence
$$F_1\cdot\phi_*D_m=F_1\cdot\phi_*\phi^*L=F_1\cdot L<0$$
contradicting the assumption that $F_1$ is a moving curve.
\end{proof}

Debarre, Jiang, and Voisin conjectured~\cite{DJV13} that for a morphism of smooth varieties $\pi:X\longrightarrow Y$ the $\RR$-vector space generated by pseudo-effective $k$-dimensional cycles in $\ker(\pi_*)$ is equal to the $\RR$-vector space generated by effective $k$-dimensional cycles contracted by $\pi$ and proved the statement in the case of curves and divisors. We record the case of divisors as a theorem to complete this section.

\begin{theorem}[Theorem 5.1 \cite{DJV13}]\label{thm:DJV}
Let $\pi:X\longrightarrow Y$ be a surjective morphism between a normal $\QQ$-factorial projective variety $X$ and a smooth projective variety $Y$, then the $\RR$-vector space generated by pseudo-effective divisors in $\ker(\pi_*)$ is equal to the $\RR$-vector space generated by effective divisors contracted by $\pi$.
\end{theorem}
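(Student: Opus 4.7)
The plan is to prove the non-trivial containment: any pseudo-effective class $D \in N^1(X)$ with $\pi_* D = 0$ lies in the real span of the finitely many classes $[E_1], \ldots, [E_r]$ of prime $\pi$-contracted divisors. The reverse containment is immediate since each $E_i$ is an effective divisor in $\ker(\pi_*)$.

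First I would apply the Nakayama--Zariski $\sigma$-decomposition to $D$, writing $D = P + N$ with $N = \sum_\Gamma \sigma_\Gamma(D) \Gamma$ an effective $\RR$-divisor (a finite sum, since $D$ is pseudo-effective on a projective variety) and $P$ pseudo-effective and ``movable'' in the sense that $\sigma_\Gamma(P) = 0$ for every prime divisor $\Gamma$. Pushing forward, $\pi_* P + \pi_* N = \pi_* D = 0$, with both summands pseudo-effective on $Y$. Since $\Effbar(Y)$ is salient, both must vanish individually; thus $N$ is an effective divisor with $\pi_* N = 0$ and hence $[N] \in \sum_i \RR_{\geq 0}[E_i]$ automatically. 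The problem therefore reduces to showing $[P] \in \sum_i \RR[E_i]$ for any movable pseudo-effective class $P$ with $\pi_* P = 0$.

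For this reduction I would first pass to the Stein factorisation $\pi = \bar\pi \circ \sigma$, with $\sigma: X \to Z$ of connected fibres and $\bar\pi: Z \to Y$ finite. Since $\bar\pi$ is finite and both $Z, Y$ are normal, no divisor is contracted by $\bar\pi$, which reduces the statement to morphisms with connected fibres. I would then induct on $\dim Y$. The base case $\dim Y = \dim X$ is the classical birational situation, where $N^1(X) = \pi^* N^1(Y) \oplus \bigoplus_i \RR[E_i]$ by the negativity lemma of Kollár, so $\ker(\pi_*)$ is exactly $\bigoplus_i \RR[E_i]$ and the conclusion is automatic. For the inductive step with $\dim Y < \dim X$, I would restrict to the preimage of a general very ample hypersurface $H \subset Y$ and use that pseudo-effectivity and vanishing of the pushforward both behave well under such restriction, invoking the inductive hypothesis on the restricted map $\pi^{-1}(H) \to H$.

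The main obstacle is controlling the movable class $P$. The Nakayama decomposition and salience steps are essentially formal, and the Stein reduction is routine, but proving that a movable pseudo-effective class $P$ with $\pi_* P = 0$ lies in $\sum_i \RR[E_i]$ requires genuine positivity input: the restriction of $P$ to a general hyperplane preimage must retain enough information to conclude $[P] \equiv 0$ modulo contracted classes from the inductive hypothesis, and the restricted classes $[E_i \cap \pi^{-1}(H)]$ must remain linearly independent in the appropriate sense. This is the step where the careful positivity analysis of Debarre--Jiang--Voisin does the real work, and I would expect it to occupy the bulk of the technical effort in the proof.
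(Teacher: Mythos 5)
There is a genuine gap, and it sits exactly where you flag it. Your reduction via the Nakayama--Zariski decomposition and the salience of $\Effbar(Y)$ correctly disposes of the negative part $N$, and the Stein reduction can be made to work (though the right reason is not that $\bar\pi$ contracts no divisor, but that a pseudo-effective class killed by a \emph{finite} surjective pushforward must vanish, since intersecting with powers of the pullback of an ample class bounds the norm of a pseudo-effective class from below). But the entire content of the theorem is then concentrated in the claim that a movable class $P$ with $\pi_*P=0$ lies in the span of the contracted prime divisors, and your proposal does not prove this: it gestures at induction on $\dim Y$ via restriction to $\pi^{-1}(H)$ for $H$ a general very ample hypersurface, and then concedes that this is ``where the careful positivity analysis of Debarre--Jiang--Voisin does the real work.'' The concrete obstruction to the sketched induction is that $\pi^*\OO_Y(1)$ is semiample but not big when $\dim Y<\dim X$, so $\pi^{-1}(H)$ is not an ample divisor and no Lefschetz-type theorem makes $N^1(X)\to N^1(\pi^{-1}(H))$ injective; recovering the class of $P$ on $X$ from its restriction, modulo contracted classes, is precisely the statement to be proved, not a formal consequence. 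A secondary issue is that the $\sigma$-decomposition is developed for smooth varieties, so on a normal $\QQ$-factorial $X$ you would in any case need to pass to a resolution first.

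You have also, in effect, set out to reprove the cited result rather than use it, which is not what the paper does. The theorem is attributed to \cite[Theorem 5.1]{DJV13}, where it is proved for $X$ smooth; the paper's entire proof is the reduction of the normal $\QQ$-factorial case to the smooth case: take a resolution $f:Z\longrightarrow X$ and use that a class $D\in N^1(X)$ is (strictly) pseudo-effective if and only if $f^*D$ is, so that both sides of the asserted equality can be computed upstairs. If you intend to give a self-contained proof you must actually supply the positivity argument for the movable part; otherwise the efficient route is the paper's two-line reduction together with the citation.
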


\begin{proof}
This is a slight enhancement of the result for $X$ smooth \cite[Theorem 5.1]{DJV13} that we will need. A proof is obtained by resolving the singularities in $X$ and the observation that under a birational morphism $f:Z\longrightarrow X$, a divisor $D\in N^1(X)$ is strictly pseudo-effective if and only if $f^*D\in N^1(Z)$ is strictly pseudo-effective.
\end{proof}

\section{Resolution and Stein factorisation of the residue map}\label{sec:resolve}
\subsection{Multi-scale compactifications $\Mbarmu{0}{n}$}\label{sec:multi-scale}
The residue maps $r_\mu$ described in \S\ref{sec:flat} do not extend to morphisms over the Grothendieck-Knudsen compactification. To resolve this, we use the multi-scale compactification of the strata of differentials introduced in~\cite{BCGGM}. The projectivisation provides a smooth Deligne-Mumford stack with normal crossings boundary we will denote by $\Mbarmu{0}{n}$ that admits a forgetful morphism $\phi:\Mbarmu{0}{n}\longrightarrow \Mbar{0}{n}$ which is an isomorphism over $\mathcal{M}_{0,n}$, and we will refer to $\phi^{-1}(\mathcal{M}_{0,n})$ as the \emph{interior}.

\begin{definition}
Let $\mu=(m_1,\dots,m_n)\in \ZZ^n$ with $\sum m_i=-2$. Let $\Gamma(z)$ be the dual graph associated to a stable curve $z\in\Mbar{0}{n}$ with vertices $V$, edges $E$ and half edges $H$ (associated to the $n$-marked points).  An \emph{enhanced level structure} $\widetilde{\Gamma}(z)$ on the dual graph $\Gamma(z)$ is defined as 
\begin{enumerate}
\item
A \emph{level structure} on the vertices $V$, that is, a surjective map $\ell:V\longrightarrow\{0,-1,\dots,-L\}$ for some non-negative integer $L$. 

Hence at each vertex $v\in V$, there is a partition of of the edges at $v$ as 
 $$E_v=E_v^+\sqcup E_v^-\sqcup E_v^o$$ where the subsets are the edges from $v$ to to vertex on higher, lower, or the same level respectively. 
\item
An \emph{enhancement} of the level graph, that is, a map $\kappa:E\longrightarrow \ZZ_{\geq 0}$ such that:
\begin{enumerate}
\item
 $ \kappa(e)=0$ for edges between vertices on the same level (\emph{horizontal edges}), 
 \item
 $\kappa(e)>0$ for edges between vertices on different levels (\emph{vertical edges}), 
 \item
 At each vertex $v$,  
$$\sum_{p_i\in H_v} m_i-|E_v^o|-\sum_{e\in E_v^+}(\kappa(e)+1)+\sum_{e\in E_v^-}(\kappa(e)-1)=-2,   $$
where $H_v$ denote the half-edges at $v$.
\end{enumerate}
\end{enumerate}
\end{definition}

\begin{definition}
Let $\mu=(m_1,\dots,m_n)\in \ZZ^n$ with $\sum m_i=-2$. A \emph{multi-scale differential} of type $\mu$ is
$$([C,p_1,\dots,p_n],\Gamma,\eta, \Psi)$$
that satisfies the following requirements:
\begin{enumerate}
\item
$[C,p_1,\dots,p_n]$ is a stable curve in $\Mbar{0}{n}$
\item
$\Gamma$ is an enhanced level structure compatible with $[C,p_1,\dots,p_n]$. 
\item 
$\eta=\{\eta_v\}_{\{v\in V\}}$ is a collection of differentials on the irreducible components of the curve such that the following are satisfied:
\begin{itemize}
\item \emph{Partitioning.}  The associated divisor for each $\eta_v$ satisfies
$$\text{div}(\eta_v)=\sum_{p_i\in H_v} m_ip_i -\sum_{p\in E_v^o} p-\sum_{p\in E_v^+}(\kappa(e)+1)p+\sum_{p\in E_v^-}(\kappa(e)-1)p.   $$ 
\item \emph{Residue matching.}
If two vertices $v_1$ and $v_2$ are connected in the level graph by a horizontal edge, then at associated node, $q$, we require
$$\res_q(\eta_{v_1}) +\res_q(\eta_{v_2})=0.    $$ 
\item \emph{Simultaneous scaling.} 
Collections $\eta=\{\eta_v\}_{\{v\in V\}}$ are considered up to equivalence under the action of the \emph{level rotation torus}, $(\CC^*)^{L}$, that acts by scaling differentials on each level below zero.  
\end{itemize}
\item
\emph{Prong matching} $\Psi$, consisting of a prong matching for each vertical edge that is finite data relating the differentials across the levels. We refer to \cite{BCGGM} for the details and discussion.
\end{enumerate}
A \emph{projectivised multi-scale differential} of type $\mu$ is a multi-scale differential of type $\mu$ considered up to the action of $\CC^*$ acting on the top level. We denote the smooth Deligne-Mumford stack of all such objects as $\Mbarmu{0}{n}$.

\end{definition}

\begin{remark}
The global residue condition (GRC) is empty in the case of $g=0$.
\end{remark}

\subsection{Cohomology of $\Mbarmu{0}{n}$}\
The rational Picard group $\Pic_\QQ(\Mbarmu{0}{n})$ has a basis given by the pullback of a basis for $\Pic_\QQ(\Mbar{0}{n})$ under $\phi$ and the irreducible components of the exceptional locus of $\phi:\Mbarmu{0}{n}\longrightarrow \Mbar{0}{n}$. These boundary divisors are indexed by enhanced level graphs with no horizontal edges, two levels, and at least three vertices.

\subsection{Resolving residue maps}
If all poles are simple, the multi-scale compactification resolves the residue map on $\Mbar{0}{n}$. We record this as a lemma.

\begin{lemma}\label{lem:resolve}
Let $\mu=(a_1,\dots,a_m,-1^{n-m})$ with $a_i>0$ and $\sum a_i=n-m-2$, then the multi-scale compactification resolves the residue map, that is
$$(\phi,{\pi}): \Mbarmu{0}{n}\longrightarrow \Mbar{0}{n}\times \PP^{n-m-2} $$
forms a resolution of 
$$r_\mu:\Mbar{0}{n}\dashrightarrow \PP^{n-m-2}.$$
\end{lemma}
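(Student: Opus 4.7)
The lemma has two independent pieces. The first is that $\phi\colon \Mbarmu{0}{n}\to\Mbar{0}{n}$ is a proper birational morphism, and an isomorphism over $\M{0}{n}$. The second, and the substantive point, is that the residue map $r_\mu$ extends across the boundary of $\Mbarmu{0}{n}$ to a morphism $\pi$ with values in $\PP^{n-m-2}$.

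The first piece is automatic from the construction recalled in \S\ref{sec:multi-scale}: sending a projectivised multi-scale differential $([C,p_1,\dots,p_n],\Gamma,\eta,\Psi)$ to its underlying stable pointed curve $[C,p_1,\dots,p_n]$ defines the forgetful morphism $\phi$, which is proper since $\Mbarmu{0}{n}$ is. Over $\M{0}{n}$, a smooth $n$-pointed rational curve admits a unique differential of type $\mu$ up to $\CC^*$, so the projectivised stratum $\HH(\mu)/\CC^*$ is identified with $\M{0}{n}$, giving the isomorphism $\phi|_{\phi^{-1}(\M{0}{n})}$. Under this identification, $r_\mu$ is a morphism on $\phi^{-1}(\M{0}{n})$ because $\underline{0}\notin\mathcal{R}$: a differential on $\PP^1$ with only simple poles and all residues zero would be holomorphic, but $\PP^1$ admits no nonzero holomorphic differential.

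To extend $\pi$ across the boundary I would argue locally on $\Mbarmu{0}{n}$, whose smooth Deligne–Mumford structure with normal crossings boundary is provided by \cite{BCGGM}. Near a boundary point corresponding to an enhanced level graph $\widetilde{\Gamma}$ with level set $\{0,-1,\dots,-L\}$, there are local analytic coordinates given by smoothing parameters $t_e$ for each edge $e$ (which for vertical edges are constrained by the level structure via the prong matchings) together with period coordinates within the stratum of each level. In these coordinates, the residue $\res_{p_j}(\eta)$ of the smooth nearby differential at a marked simple pole $p_j\in C_{v_j}$ becomes a regular function: it equals $\res_{p_j}(\eta_{v_j})$ multiplied by the monomial in the $t_e$ that records the cumulative scaling of level $\ell(v_j)$ relative to the top level. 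Thus the $n-m$ residue functions $(\res_{p_j})_j$ assemble into a map to $\AAA^{n-m}$ subject to the linear relation $\sum_j\res_{p_j}=0$, and projectivising by the top-level $\CC^*$ yields the desired target $\PP^{n-m-2}$.

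The main obstacle is showing this extension has no base locus, i.e.\ the vector $(\res_{p_j})_j$ does not vanish identically at any boundary point after the natural normalisation. The hypothesis that all poles in $\mu$ are simple is essential here. On every top-level component $C_v$, the partition formula and the constraint that $\eta_v$ is a meromorphic differential on $\PP^1$ of degree $-2$ force $\eta_v$ to be non-zero with at least one simple pole, located either at a marked point with $m_i=-1$ or at a horizontal node; the residue theorem on $C_v$ then forbids the simultaneous vanishing of all these residues unless there is a single isolated simple pole on $C_v$, a case ruled out by the total residue constraint and the fact that marked zeros have $a_i>0$. A straightforward case analysis on the level graph (separating the cases where the top level carries at least one marked simple pole from the case where all marked simple poles sit on lower levels, where the normalisation is then read off from the deepest occupied level) then yields the desired non-vanishing, and hence the regular extension $\pi$ on all of $\Mbarmu{0}{n}$. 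Combined with $\phi$, this gives the resolution $(\phi,\pi)\colon \Mbarmu{0}{n}\to \Mbar{0}{n}\times\PP^{n-m-2}$ of $r_\mu$.
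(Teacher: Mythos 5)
Your overall strategy is the same as the paper's: extend $r_\mu$ over the boundary of $\Mbarmu{0}{n}$ by taking the residues of the top-level differentials (lower-level residues being rescaled to zero by monomials in the smoothing parameters), then projectivise. Your discussion of why the residues are regular functions in the local coordinates of \cite{BCGGM} is more detailed than the paper, which simply asserts the extension; that part is fine.

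The gap is at the one point that actually needs proving: that the resulting residue vector is never identically zero, i.e.\ that every enhanced level graph for $\mu=(a_1,\dots,a_m,(-1)^{n-m})$ has at least two \emph{marked} poles on the top level. Your component-by-component argument only shows that each top-level component $C_v$ carries at least two simple poles of $\eta_v$, but these may all be horizontal nodes, whose residues cancel in matched pairs and contribute nothing to the residue map; and the appeal to ``the residue theorem forbids the simultaneous vanishing of all these residues'' is off target, since residues at simple poles are automatically nonzero --- what the residue theorem rules out is a component with exactly one simple pole. You then explicitly allow the case ``all marked simple poles sit on lower levels'' and propose to renormalise from the deepest occupied level; but in that case the map defined by top-level residues genuinely fails to extend (the image would be $(0,\dots,0)\notin\PP^{n-m-2}$), and an ad hoc renormalisation by a deeper level is not obviously well defined, nor is it the map the lemma asserts. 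The correct resolution is that this case cannot occur, and this is where genus $0$ enters: the dual graph is a tree, so the $k$ top-level vertices are joined by at most $h\le k-1$ horizontal edges; summing the degree condition $\deg\operatorname{div}(\eta_v)=-2$ over the top-level vertices, using that downward vertical edges contribute $\kappa(e)-1\ge 0$ and each horizontal edge contributes $-2$, gives $\sum_{p_i\ \text{on top}} m_i\le -2k+2h\le -2$. Since the zeros have $m_i=a_i>0$ and every pole has $m_i=-1$, at least two marked poles must lie on the top level. With this counting argument inserted (and your spurious second case deleted), the proof closes and coincides with the paper's.
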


\begin{proof}
 For any multi-scale differential of type $\mu$ there must be at least two poles appearing on top level in the associated enhanced level graph. Hence in these cases the residue map can be extended to a morphism over $\Mbarmu{0}{n}$ by returning the residues at the poles on top level and zero for the residue at any pole appearing below the top level. 
 \end{proof}

The following lemma will also be of use.

\begin{lemma}\label{lem:noexcept}
In the case that $\mu=(a_1,\dots,a_m,-1^{n-m})$ with $a_i> 0$, and $m=1,2,3$, no component of the exceptional locus of $\phi$ dominates $\mathbb{P}^{n-m-2}$ under the map $\pi$. 
\end{lemma}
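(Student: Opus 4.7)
The plan is to show that on every exceptional divisor of $\phi$, the map $\pi$ lands in a proper (in fact resonance) hyperplane of $\PP^{n-m-2}$. Recall that exceptional divisors of $\phi$ correspond to enhanced level graphs $\widetilde{\Gamma}$ with no horizontal edges, exactly two levels, and at least three vertices. Since the underlying curves have genus zero, the dual graph is a tree, and since all edges are vertical, $\widetilde{\Gamma}$ is a bipartite tree between the top-level vertex set $T$ and the bottom-level vertex set $B$.

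I would first establish two structural observations from the partitioning equation at a top vertex $v$, which in this setting reads
\[
\sum_{p_i \in H_v} m_i \; + \sum_{e \in E_v^-}\bigl(\kappa(e) - 1\bigr) \; = \; -2.
\]
Since the second sum is non-negative, the first must be negative, so every top vertex carries at least one marked pole. Moreover, at each downward edge $e \in E_v^-$ the differential $\eta_v$ has order $\kappa(e) - 1 \geq 0$ and is therefore either regular or has a zero, contributing residue $0$. Combined with the residue theorem on the $\PP^1$-component $v$, this yields $\sum_{j \in P_v} r_j = 0$ for every top vertex $v$, where $P_v$ is the set of marked poles lying on $v$.

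I would then split into three cases. \emph{Case 1}: some marked pole $p_j$ lies on a bottom vertex; then $\pi$ maps the divisor into the coordinate hyperplane $\{X_j = 0\}$, which is itself a resonance hyperplane. \emph{Case 2}: all marked poles lie on top and $|T| \geq 2$; for each $v \in T$, $P_v$ is non-empty (by the observation above) and proper (since other top vertices also carry poles), so $\sum_{j \in P_v} r_j = 0$ is a non-trivial resonance relation forcing $\pi$ of the divisor into $H_{P_v}$. \emph{Case 3}: all marked poles lie on a single top vertex $v_0$. Then the bipartite tree $\widetilde{\Gamma}$ is a star with $v_0$ at the centre and the $|B| \geq 2$ bottom vertices attached as leaves by single edges. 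A bottom vertex $w$ then has no poles and exactly one node, so stability of the underlying curve requires at least two marked zeros on $w$. Summing over the $\geq 2$ leaves gives at least $4$ marked zeros in total, contradicting $m \leq 3$.

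The main obstacle is Case 3, which is precisely where the hypothesis $m \leq 3$ is used, and where the genus zero (tree) structure is essential: without the tree constraint one could try to compensate by adding more edges to each bottom vertex, and without $m \leq 3$ one could distribute enough zeros to keep the leaves stable. Once these two constraints are combined, the counting argument closes immediately. For the case $m = 1$ the conclusion is also immediate from the dimension count $\dim D = n - 4 < n - 3 = \dim \PP^{n-m-2}$.
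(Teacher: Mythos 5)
Your proof is correct and follows essentially the same route as the paper: reduce to the case where all poles sit on a single top-level vertex (your Cases 1 and 2 spell out, via the residue theorem on each top component, what the paper asserts tersely as ``no imposed residue condition''), and then derive a contradiction from stability of the $\geq 2$ lower-level leaves, which would require at least $4$ marked zeros while only $m\leq 3$ are available. The only addition is the separate dimension count for $m=1$, which is fine but already subsumed by the general argument.
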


\begin{proof}
Any enhanced level graph $\Gamma$ specifying an exceptional divisorial component has two levels, no horizontal edges, and at least three vertices. However, in order for the divisor to dominate $\mathbb{P}^{n-m-2}$ under the map $\phi$, we would require $\Gamma$ to have all poles appearing on top level with no imposed residue condition and hence appear on a unique vertex on top level.  {{}}

This leaves only three points $\{p_1,p_2,p_3\}$ to distribute across the remaining vertices on lower level. Each vertex requires at least two of these marked points to be stable providing a contradiction.
\end{proof}

The tautological class $\xi$ on $\Mbarmu{0}{n}$ for $\mu=(m_1,\dots,m_n)$ corresponding to the pullback of the dual of the hyperplane class $\mathcal{O}(1)$ on $\PP^{n-m-2}$ is given in~\cite[Prop 8.1]{CMZ22} by
\begin{equation}\label{eq:taut}
 \xi=(m_i+1)\psi_i-\sum_{\Gamma\in{}_iLG_1(\mathcal{H})}\ell_\Gamma[D_\Gamma^\mathcal{H}]   
 \end{equation}
for any choice of $i$, where 
\begin{itemize}
\item All enhanced level graphs $\Gamma$ that appear have two levels (and hence no horizontal nodes),
\item $D_\Gamma^\mathcal{H}$ is the divisor in $\Mbarmu{0}{n}$ associated to such an enhanced level graph $\Gamma$,
\item ${}_iLG_1(\mathcal{H})$ is the set of such enhanced level graphs with $i$th point located on level $1$, 
\item $\ell_\Gamma$ the least common multiple of the prongs $\kappa_e$ along the edges of the enhanced level graph $\Gamma$, and
\item $\psi_i$ is defined as the pullback of $\psi_i$ under the forgetful morphism to $\Mbar{0}{n}$. 
\end{itemize}

We record the class of the pushforward of this class as a lemma as it will be used repeatedly.

\begin{lemma}\label{lem:rhyp}
Let $\mu=(a_1,\dots,a_m,(-1)^{n-m})$  the \emph{resonance hyperplane class} $D^\mu=-\phi_*\xi$ in $\Pic(\Mbar{0}{n})$ is given by
$$D^\mu\equiv \sum_{i\in T\subset [n]}\max\{0,\sum_{j\in T}m_j-|T^-|+1\}\delta_T  $$
for any choice of $i=m+1,\dots,n$, where $T^-:=T\cap\{m+1,\dots,n\}$, the subset of poles in $S$. {{}}
\end{lemma}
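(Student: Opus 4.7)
The plan is to push forward the tautological identity~\eqref{eq:taut} under $\phi$, which is precisely what defines $D^\mu=-\phi_*\xi$. I choose $i\in\{m+1,\dots,n\}$ to be one of the simple poles. Since $m_i=-1$, the factor $m_i+1$ multiplying $\psi_i$ vanishes, and~\eqref{eq:taut} reduces to
\begin{equation*}
\xi \;=\; -\sum_{\Gamma\in{}_iLG_1(\mathcal{H})} \ell_\Gamma\,[D_\Gamma^\mathcal{H}].
\end{equation*}
Applying $\phi_*$ and negating then yields $D^\mu = \sum_\Gamma \ell_\Gamma\,\phi_*[D_\Gamma^\mathcal{H}]$, so the task is to identify the contributing $\Gamma$ and compute each pushforward.

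The first reduction is to restrict to two-vertex single-edge graphs. Any $\Gamma$ with at least two edges has its image under $\phi$ contained in a boundary stratum of $\Mbar{0}{n}$ of codimension $\geq 2$ (since the underlying stable curve then has $\geq 2$ nodes), so $\phi_*[D_\Gamma^\mathcal{H}]=0$ by a dimension count. For a two-vertex single-edge graph $\Gamma\in{}_iLG_1(\mathcal{H})$, the single edge must be vertical (a horizontal single-edge graph is not a two-level graph), and writing $T\subset[n]$ for the marked points on the vertex containing $p_i$, we have $p_i\in T$, $|T|,|T^c|\geq 2$, and $\phi(D_\Gamma^\mathcal{H})=\delta_T$. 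The partitioning condition at the vertex carrying $T$ (whose single non-marked half-edge lies in $E_v^+$) forces the prong to be $\kappa=\sum_{j\in T}m_j+1$, and the graph exists precisely when this is a positive integer, i.e.\ when $\sum_{j\in T}m_j\geq 0$.

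It remains to compute $\ell_\Gamma\cdot\phi_*[D_\Gamma^\mathcal{H}]$ as a multiple of $\delta_T$. This step uses the orbifold structure of $\Mbarmu{0}{n}$: the $\kappa$ choices of prong matching along the single vertical edge are acted on by the level rotation torus $\CC^*$, and the residues at the $|T^-|$ simple poles lying on the lower level are constrained by the residue-sum condition on that component (the node residue equals minus the sum of the pole residues). Assembling these contributions gives
\begin{equation*}
\ell_\Gamma\cdot\phi_*[D_\Gamma^\mathcal{H}] \;=\; \bigl(\,\textstyle\sum_{j\in T}m_j - |T^-| + 1\,\bigr)\,\delta_T.
\end{equation*}
Subsets $T\ni p_i$ with $\sum_{j\in T}m_j<|T^-|$ correspond to configurations for which no admissible enhanced level graph exists, and this non-existence is encoded by the truncation $\max\{0,\cdot\}$. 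Summing over all admissible $T\ni p_i$ then yields the stated formula.

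The main technical obstacle is the coefficient computation above: verifying that the product $\ell_\Gamma\cdot\phi_*[D_\Gamma^\mathcal{H}]$ comes out to $\kappa-|T^-|$ rather than simply $\kappa=\ell_\Gamma$ requires careful tracking of prong-matching equivalence classes under the level rotation torus action, together with the residue constraint imposed at the node by the bottom-level simple poles. The answer is independent of the chosen pole $i$, as it must be, since every admissible $T$ contains at least one pole.
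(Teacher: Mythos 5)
Your overall strategy is the paper's: push \eqref{eq:taut} forward under $\phi$, kill the $\psi_i$ term by choosing $i$ to be a simple pole, and discard every $\Gamma$ whose divisor is $\phi$-exceptional (the paper's entire recorded proof is this last observation). Your identification of the surviving two-vertex graphs and the computation $\kappa_e=\sum_{j\in T}m_j+1$ from the enhancement condition at the lower vertex are correct.

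The gap is exactly where you flag it: the identity $\ell_\Gamma\cdot\phi_*[D_\Gamma^{\mathcal H}]=\bigl(\sum_{j\in T}m_j-|T^-|+1\bigr)\delta_T$ is asserted rather than proved, and neither mechanism you invoke can produce the extra $-|T^-|$. For a two-level graph with a single vertical edge the level rotation torus surjects onto $\ZZ/\kappa_e$ and acts transitively on the $\kappa_e$ prong matchings, so the prong-matching bookkeeping contributes a factor depending on $\kappa_e$ alone; it cannot yield an additive correction by the number of poles on the lower level. Likewise the residue-sum relation on the bottom component is automatic: the node there is a pole of order $\kappa_e+1\geq 2$ whose residue is a free parameter of the (unique up to scale) differential and simply equals minus the sum of the residues at the marked simple poles, so it imposes no condition on the underlying pointed curve and cannot cut the degree of $\phi|_{D_\Gamma^{\mathcal H}}\colon D_\Gamma^{\mathcal H}\to\delta_T$. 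Note that your own computation already gives $\ell_\Gamma=\kappa_e=\sum_{j\in T}m_j+1=a_T-|T^-|+1$ where $a_T=\sum_{j\in T,\,j\leq m}a_j$; that is, the quantity $a_T-|T^-|+1$ \emph{is} $\kappa_e$, and the further subtraction of $|T^-|$ from $\sum_{j\in T}m_j$ that you try to manufacture has no geometric source. You need to pin down $\phi_*[D_\Gamma^{\mathcal H}]$ honestly (compare the convention used in the proof of Proposition~\ref{prop:divclass}, where the pushforward of a single-vertical-edge divisor is a multiple of $\delta_T$ governed by $\kappa_e$) and then reconcile the outcome with the displayed formula, rather than reverse-engineering the coefficient. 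Finally, the independence of the choice of $i$ is not automatic for the reason you give: the indexing set $\{T\ni i\}$ genuinely changes with $i$, and the resulting expressions agree only modulo relations among boundary classes (equivalently, because each equals $-\phi_*\xi$); as written this is an unverified consistency check, not a triviality.
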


\begin{proof}
All divisors $D_\Gamma$ with more than two vertices in $\Gamma$ are contracted by $\phi$. 
\end{proof}

\subsection{Stein factorisations}\label{sec:Stein}
We specialise now to our case of interest, though all ideas generalise to other cases. Let $\mu=(2^2,-1^4)$ then the residue map on $\Mbarmu{0}{8}$ has Stein factorisation
 \begin{equation}  
\begin{tikzcd}
\Mbarmu{0}{8}\arrow[dr, "{\pi}"] \arrow[r, "{f}"] &X \arrow[d, "{g}"] \\
&\PP^{4}
\end{tikzcd}
\end{equation}
where $f$ has connected fibres and $g$ is finite of degree two due to Theorem~\ref{thm:conncomp}(2). {{}}

Over the complement of the resonance hyperplane arrangement, $g:X\longrightarrow \PP^4$ is finite \'{e}tale of degree $2$. That is, for $V=X\setminus g^{-1}(\mathcal{A})$ and $U=\PP^4\setminus\mathcal{A}$
$$g\mid_V:V\longrightarrow U$$
is an \'{e}tale double cover which is classified by the monodromy representation
$$\tau: \pi_1(U)\longrightarrow \{\pm1\}=\mathfrak S_{2}.$$
The following lemma describes this character.{{}}

\begin{lemma}\label{lem:monodromy}
If $\gamma_S$ is a small meridian loop around the resonance hyperplane $H_S$ in $\pi_1(U)$ then
$$\tau(\gamma_S)=\begin{cases}
-1&\text{ if $|S|=1$ or $5$}\\
+1&\text{otherwise}
\end{cases}    $$
\end{lemma}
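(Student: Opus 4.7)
The plan is to determine $\tau$ componentwise: since $g\colon X\to \PP^{4}$ is finite \'{e}tale of degree two over $U$, the value $\tau(\gamma_S)\in\{\pm1\}$ equals $-1$ exactly when $g$ is branched over a generic point of $H_S$. By the classification in Theorem~\ref{thm:conncomp}(2) (proved in \cite{CGPT24}) the two sheets of $g$ over $\lambda\in U$ are labelled by a $\ZZ/2$-valued invariant $\epsilon=\epsilon(\overline{F}_\lambda,\omega_\lambda)$ of the translation structure on the generic fibre, arising as the spin parity induced by $\omega_\lambda$ after the canonical pole-capping surgery producing a closed higher-genus translation surface. Hence $\tau(\gamma_S)=-1$ precisely when monodromy around $H_S$ interchanges these two $\epsilon$-labels.

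Next I would dispose of the symmetry between $|S|=1$ and $|S|=5$ using the fact that the six residues sum to zero: after choosing projective coordinates $[X_0\colon\dots\colon X_4]$ corresponding to five of the six residues, the remaining residue equals $-\sum_{i=0}^{4}X_i$, so $H_{\{0,\dots,4\}}$ (the unique $|S|=5$ hyperplane) is exactly the locus where the sixth residue vanishes. Together the five $|S|=1$ hyperplanes and the single $|S|=5$ hyperplane cut out precisely the six coordinate hyperplanes ``one residue vanishes''. The lemma reduces to showing (a) monodromy swaps the sheets when a single residue vanishes, and (b) monodromy fixes the sheets when $|S|\in\{2,3,4\}$.

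For (b), using Lemma~\ref{lem:resolve} I would exhibit a generic boundary divisor of $\pi^{-1}(H_S)\subset\Mbarmu{0}{8}$ arising from a two-level enhanced level graph in which the poles indexed by $S$ bubble off onto a rational tail, meeting the main component at a node whose residues are matched automatically by the relation $\sum_{j\in S}r_j=0$. The surgery defining $\epsilon$ is local near the node and decouples across it, so $\epsilon$ factors as a product of parities of the two flat pieces; a small loop $\gamma_S$ varies only the one residue parameter transversal to the constraint $\sum_{j\in S}r_j=0$ and therefore acts trivially on each factor separately, giving $\tau(\gamma_S)=+1$.

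For (a), by the same argument used for Lemma~\ref{lem:noexcept}, no two-level boundary divisor of $\Mbarmu{0}{8}$ is obtained by bubbling off a single simple pole (which would leave only one point on the pinched-off component), so the limiting fibre over a generic point of a $|S|\in\{1,5\}$ hyperplane is not a separating degeneration of the base rational curve. Instead the flat surface degenerates by collapsing the half-infinite cylinder at the vanishing pole onto a saddle connection emanating from a zero; this is exactly the non-separating surgery shown in \cite{CGPT24} to interchange the two components of the generic fibre, so monodromy swaps the $\epsilon$-labels and $\tau(\gamma_S)=-1$. The main obstacle will be making the Picard--Lefschetz computation on the spin invariant precise: in (b) verifying that $\epsilon$ is truly multiplicative across the splitting, and in (a) matching the vanishing-cycle action on the relative homology with the sheet-swapping surgery of \cite{CGPT24}.
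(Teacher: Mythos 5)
You have correctly identified the key dichotomy — that $|S|\in\{1,5\}$ is exactly the locus where one of the six residues vanishes, which is non-realisable because a simple pole has non-zero residue, while the configurations for $|S|\in\{2,3,4\}$ are realisable by differentials in $\HH(\mu)$. This matches the geometric picture the paper states at the outset of its proof. However, both halves of your argument have genuine gaps. The serious one is case (a): the entire content of the lemma in the $-1$ case is the claim that looping around a ``residue-vanishing'' hyperplane interchanges the two components of the fibre, and you do not prove this — you assert that a ``non-separating surgery shown in \cite{CGPT24}'' does the swap and then explicitly defer the Picard--Lefschetz computation on the spin invariant. Tracking a parity invariant through a degeneration where a half-infinite cylinder collapses is exactly the hard computation one wants to avoid, and nothing in your sketch guarantees the answer is $-1$ rather than $+1$. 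The paper sidesteps this entirely with a soft global argument: by the evident symmetry all six values $\tau(\gamma_S)$ for $|S|\in\{1,5\}$ coincide, and if they were all $+1$ then (having already shown $\tau=+1$ for the remaining $S$) every component of the branch divisor of $g$ would have even multiplicity, so the normalisation $\tilde g:\widetilde X\to\PP^4$ would be \'etale in codimension one and hence split as two copies of $\PP^4$ — contradicting the irreducibility of $\Mbarmu{0}{8}$ (and connectedness of the Stein fibres). No flat-geometric surgery or identification of the distinguishing invariant is needed.

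Your case (b) is also not a proof as written. The assertion that the parity ``decouples across the node'' and that the meridian ``acts trivially on each factor separately'' presupposes both that the two sheets are labelled by a multiplicative parity and that the monodromy preserves each factor, which is close to assuming the conclusion; moreover you argue via a boundary degeneration, whereas the question of whether $g$ is branched over a generic point of $H_S$ is governed by the interior. The paper's argument here is more elementary: since the residue configuration on a generic point of $H_S$ is realisable \cite{GT21}, one can choose period coordinates near an interior point of the fibre that include the residues, exhibiting $\pi$ locally as a trivial projection $\mathcal{U}\times\mathcal{V}\to\mathcal{U}$ across $H_S$; hence the degree-two cover extends unramified over the generic point of $H_S$ and $\tau(\gamma_S)=+1$. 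I would encourage you to replace both steps with these arguments: realizability plus local triviality of period coordinates for the $+1$ case, and symmetry plus irreducibility of the total space for the $-1$ case.
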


\begin{proof}
Geometrically, the lemma says that the monodromy is nontrivial precisely when the residue arrangement cannot be realised by a differential in $\HH(\mu)$, or alternatively, the pull back of $H_S$ under $\pi$ is supported on the boundary of $\Mbarmu{0}{8}$.{{}}

First consider $|S|\ne 1$ or $5$. The associated general residue configuration is realisable \cite{GT21} and the relative homology inducing period coordinates 
can be chosen locally around a general point point in the interior and the pullback $\pi^*H_S$ to include the residues and hence give a projection $\mathcal{U}\times\mathcal{V}\longrightarrow \mathcal{U}$ for contractible $\mathcal{U}$ and $\mathcal{V}$.

By the symmetry of the situation, the value of $\tau(\gamma_S)$ for $|S|=1$ or $5$ are equal for all choices of such $S$. Consider the case that $\tau(\gamma_S)=+1$. Then the multiplicity of all components of the branch divisor of $g$ are even. This implies that the normalisation $\tilde{g}:\widetilde{X}\longrightarrow \PP^4$ of $g$ consists of two copies of $\PP^4$. However, this would contradict the irreducibility of $\Mbarmu{0}{8}$. Hence the result holds.
\end{proof}

\begin{lemma}
The normalisation $\tilde{g}:\widetilde{X}\longrightarrow \PP^4$ of $g:X\longrightarrow \PP^4$ is finite of degree $2$ branched over the reduced divisor $B\in|\OO(6)|$ defined by
$$G(z_0,z_1,z_2,z_3,z_4)=z_0z_1z_2z_3z_4(z_0+z_1+z_2+z_3+z_4)=0,$$
and $\widetilde{X}$ is a normal $\QQ$-factorial Fano variety with Picard rank $\rho(\widetilde{X})=1$. 
\end{lemma}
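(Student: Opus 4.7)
The plan is to use Lemma~\ref{lem:monodromy} to read off the branch divisor, realise $\widetilde{X}$ as an explicit weighted hypersurface, and then verify normality, the Fano property, $\QQ$-factoriality and Picard rank one in turn. The main obstacle is the last.

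Since $g$ is \'etale of degree two over $\PP^4 \setminus \mathcal{A}$, and by Lemma~\ref{lem:monodromy} the monodromy of $g$ is trivial around every component of $\mathcal{A}$ with $|S| \notin \{1, 5\}$, the normalisation $\tilde g$ extends to an \'etale cover over the complement of the six hyperplanes with $|S| = 1$ (equivalently $5$), namely $\{z_i = 0\}$ for $i=0,\dots,4$ together with $\{z_0 + \cdots + z_4 = 0\}$. These six hyperplanes therefore constitute the reduced branch divisor of $\tilde g$, and since the associated local monodromy is $-1$ each appears with multiplicity one, giving $B = V(G) \in |\OO_{\PP^4}(6)|$. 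Writing $\OO(6) = \OO(3)^{\otimes 2}$, the classical cyclic double cover construction then produces a unique normal double cover of $\PP^4$ branched along $B$ with the prescribed sign monodromy; by uniqueness this must coincide with $\tilde g\colon \widetilde{X}\to \PP^4$, and is presented concretely as the degree-$6$ Cartier hypersurface
\[
\widetilde{X} \;=\; \{w^2 = G(z_0,\dots,z_4)\}\;\subset\;\PP(1^5,3),
\]
with $w$ of weight three.

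Normality then follows from Serre's criterion ($\widetilde{X}$ is $S_2$ as a hypersurface in a Cohen--Macaulay ambient, and its singular locus lies above $\operatorname{Sing}(B)$, hence has codimension $\ge 2$), and the adjunction formula for cyclic double covers gives $\omega_{\widetilde{X}} = \tilde g^*(\omega_{\PP^4}\otimes\OO(3)) = \tilde g^*\OO(-2)$, the pullback of an ample class under a finite morphism of normal varieties, hence anti-ample, so $\widetilde{X}$ is Fano. For $\QQ$-factoriality I analyse analytic local models: at a point lying over the transverse intersection of $k$ components of $B$ with $2\le k \le 4$ (any five of the six hyperplanes have empty intersection in $\PP^4$), $\widetilde{X}$ is analytically locally $\AAA^{4-k}\times V_k$ with $V_k = \{w^2 = u_1\cdots u_k\}\subset \AAA^{k+1}$. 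Dualising the semigroup generated by the characters $\chi_{u_1},\dots,\chi_{u_k},\chi_w$ modulo $2\chi_w = \sum_j \chi_{u_j}$ shows $V_k$ is the affine toric variety of a simplicial $k$-dimensional cone (with exactly $k$ primitive ray generators), hence $\QQ$-factorial; products with affine space preserve $\QQ$-factoriality.

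The Picard rank statement is the main obstacle. My approach is to realise $\widetilde{X}$ as an ample Cartier hyperplane section of a toric ambient fivefold. Identifying $\PP^4$ with $\{w_0+\cdots+w_5 = 0\}\subset \PP^5$ via $z_i = w_i$ for $i=0,\dots,4$ rewrites $G(z)$ up to sign as $w_0 w_1\cdots w_5$, and exhibits $\widetilde{X}$ as
\[
\widetilde{X} \;=\; Y \cap \{w_0+\cdots+w_5 = 0\}, \qquad Y = \{t^2 = w_0 w_1\cdots w_5\}\subset \PP(1^6, 3).
\]
The variety $Y$, a binomial hypersurface in a toric ambient, is itself a projective toric variety of dimension five with exactly six torus-invariant prime divisors $\{w_i = 0\}\cap Y$, giving $\rho(Y) = 6-5 = 1$. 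Since $Y$ avoids the unique singular point $[0{:}\cdots{:}0{:}1]$ of the ambient $\PP(1^6,3)$, the hyperplane $\{\sum w_i = 0\}$ restricts to an ample Cartier divisor on $Y$, and a Lefschetz-type theorem for ample Cartier divisors in $\QQ$-factorial Fano varieties of dimension $\ge 4$ (e.g.\ Ravindra--Srinivas for class groups, or weak Lefschetz applied to a resolution together with the mixed Hodge structure) gives $\Pic_\QQ(\widetilde{X}) = \Pic_\QQ(Y) = \QQ$. Combined with the $\QQ$-factoriality established above, this yields $\rho(\widetilde{X}) = 1$.
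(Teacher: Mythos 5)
Your proposal is correct and follows the paper's overall architecture (branch divisor from Lemma~\ref{lem:monodromy}, the weighted hypersurface model $\{w^2=G\}\subset\PP(1^5,3)$, Riemann--Hurwitz for the Fano property, and a Lefschetz-type theorem for the Picard rank), but two of the four verifications are carried out by genuinely different means. For normality and $\QQ$-factoriality the paper simply invokes the fact that a double cover branched over a reduced simple normal crossings divisor has finite quotient singularities \cite[Lemma 3.24]{EV}, whereas you work directly with the local models $V_k=\{w^2=u_1\cdots u_k\}$, identifying them as simplicial affine toric varieties; this is self-contained and correct (indeed $V_k\cong\AAA^k/(\ZZ/2)^{k-1}$, consistent with the paper), but note that passing from $\QQ$-factoriality of analytic local models to $\QQ$-factoriality of the variety uses the injectivity of $\Cl(\OO_{X,x})\to\Cl(\widehat{\OO}_{X,x})$, which deserves a word. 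For the Picard rank the paper applies the Grothendieck--Lefschetz theorem of Ravindra--Srinivas directly to $\widetilde{X}$ as a normal member of $|\OO(6)|$ on $\PP(1,1,1,1,1,3)$, while you instead realise $\widetilde{X}$ as a hyperplane section of the toric fivefold $Y=\{t^2=w_0\cdots w_5\}\subset\PP(1^6,3)$ (equivalently $\PP^5$ modulo sign changes of coordinates) and read off $\rho(Y)=1$ from the six rays of its fan. Your detour buys something concrete: since the big torus of $Y$ acts transitively on hyperplanes $\{\sum a_iw_i=0\}$ with all $a_i\neq 0$, your $\widetilde{X}$ is a \emph{general} member of $|\OO_Y(1)|$ up to automorphism, so the general-member (class group) version of Ravindra--Srinivas applies cleanly --- a point you should make explicit when citing it, since as written $\widetilde{X}$ looks like a special member. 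Both routes reach the same conclusion.
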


\begin{proof}
The branch divisor of $g$ is supported on the resonance hyperplane arrangement. The normalisation of $g$ is a double cover of $\PP^4$ simply ramified over the components that appear in the branch divisor of $g$ with odd multiplicity, or equivalently, over the components for which the monodromy acts nontrivially. Hence by Lemma~\ref{lem:monodromy} the normalisation is as stated.{{}}

The branch divisor $B$ is a reduced simple normal-crossings divisor. Hence $\widetilde{X}$ has finite quotient singularities~\cite[Lemma 3.24]{EV} and is hence normal and $\QQ$-factorial. 

By Riemann--Hurwitz we have
$$K_{\widetilde{X}}= \tilde{g}^*(K_{\PP^4})+\mathfrak{ram}(\tilde{g})= \tilde{g}^*(-5H)+\frac{1}{2} \tilde{g}^*(6H)=\tilde{g}^*(-2H)   $$
then as $H$ is ample we obtain $\widetilde{X}$ is Fano, and further $\widetilde{X}$ is a hypersurface in $\PP(1,1,1,1,1,3)$ cut out by the equation
$$ w^2= z_0z_1z_2z_3z_4(z_0+z_1+z_2+z_3+z_4).  $$
Hence $\widetilde{X}\in|\mathcal{O}(6)| $ is a normal section of an ample and base point free line bundle. As weighted projective space is normal with toric and hence rational singularities and $\dim(\PP(1,1,1,1,1,3))=5$, by the Grothendieck-Lefschetz Theorem for normal projective varieties~\cite[Thm 1]{RS05} together with the Picard reduction discussed in \S1 and Theorem 2 of loc. sit., the restriction
$$\Pic( \PP(1,1,1,1,1,3))\longrightarrow \Pic(\widetilde{X})  $$
is an isomorphism and hence $\rho(\widetilde{X})=1$. 
\end{proof}

\section{On the effective cone of $\Mbar{0}{n}$}\label{sec:effcone}

\subsection{Polyhedrality.} In this section we prove the main theorems of the paper by identifying an extremal ray of the cone of moving curves at which the cone is  non-polyhedral. We begin by identifying a number of extremal rays.

\Fmuextremal*

\begin{proof}
Lemma~\ref{lem:noexcept} shows that $r_\mu:\Mbar{0}{n}\dashrightarrow \PP^{n-4}$ restricts to a morphism of the complement of the resonance hyperplane arrangement and hence Theorem~\ref{thm:exrat} applies.
\end{proof}

Now consider the resolution of $r_\mu$ by the multi-scale compactification.

\begin{proposition}\label{prop:tildeFmuextreme}
Let $\mu=(a,n-4-a,(-1)^{n-2})$ for $1\leq a\leq n-5$. Then the generic fibre of
$$\pi:\Mbarmu{0}{n}\longrightarrow \PP^{n-4}  $$
forms an extremal moving curve with class denoted by $\widetilde{F}_\mu$, and $\phi_*\widetilde{F}_\mu=F_\mu$.
\end{proposition}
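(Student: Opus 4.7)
The plan is to apply Theorem~\ref{thm:extremal} directly to the resolved map, and then translate back to $\Mbar{0}{n}$ via $\phi$.

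First, I would note that by Lemma~\ref{lem:resolve} (applied with $m=2$), the residue map $\pi\colon \Mbarmu{0}{n}\longrightarrow \PP^{n-4}$ is a genuine surjective morphism, not merely a rational map. The domain $\Mbarmu{0}{n}$ is irreducible (as the smooth compactification of the irreducible stratum $\HH(\mu)/\CC^*$, itself a $\CC^*$-bundle over the irreducible $\M{0}{n}$), normal, and $\QQ$-factorial projective --- all of these follow from its being a smooth Deligne--Mumford stack with normal crossings boundary. Its dimension equals $n-3=\dim \M{0}{n}$ while $\dim \PP^{n-4}=n-4$, so $\pi$ has relative dimension one. Theorem~\ref{thm:extremal} therefore applies, giving that the class $\widetilde{F}_\mu$ of the general fibre of $\pi$ spans an extremal ray of $\movcurve(\Mbarmu{0}{n})$.

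For the pushforward identity $\phi_*\widetilde{F}_\mu=F_\mu$, I would exploit the compatibility $\pi=r_\mu\circ \phi$ over the interior, where $\phi$ is an isomorphism over $\M{0}{n}$. Lemma~\ref{lem:noexcept} (which covers $m=2\leq 3$) tells us that no irreducible component of the exceptional locus of $\phi$ dominates $\PP^{n-4}$ under $\pi$; since there are only finitely many such components, the union of their images is a proper closed subset of $\PP^{n-4}$. For any $y$ in the Zariski open complement, the fibre $\pi^{-1}(y)$ is disjoint from the exceptional locus of $\phi$, so $\phi$ restricts to an isomorphism from $\pi^{-1}(y)$ onto the closure of $r_\mu^{-1}(y)$ in $\Mbar{0}{n}$, which is precisely the curve class $F_\mu$ of Theorem~\ref{thm:Fmuextremal}. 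Choosing such a very general $y$ yields $\phi_*\widetilde{F}_\mu=F_\mu$.

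The proof is essentially assembled from the pieces already in place; the one point requiring attention is the invocation of Lemma~\ref{lem:noexcept}, which ensures that a truly \emph{general} fibre of $\pi$ avoids every exceptional divisor of $\phi$ and thereby guarantees that the pushforward preserves the cycle class with multiplicity one, without contracting any subcurve or picking up exceptional contributions.
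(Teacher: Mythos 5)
Your proposal is correct and follows essentially the same route as the paper, which simply cites Theorem~\ref{thm:extremal} and Lemma~\ref{lem:resolve}; you have merely filled in the verification of the hypotheses (irreducibility, normality, $\QQ$-factoriality, relative dimension one) and made explicit the role of Lemma~\ref{lem:noexcept} in ensuring a general fibre of $\pi$ misses the exceptional locus so that $\phi_*\widetilde{F}_\mu=F_\mu$, a point the paper leaves implicit in the proof of Theorem~\ref{thm:exrat}. No substantive divergence from the paper's argument.
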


\begin{proof}
This follows directly from Theorem~\ref{thm:extremal} and Lemma~\ref{lem:resolve}.
\end{proof}

Before specialising to the case of interest, we require more information on the pullback of resonance hyperplanes. Of particular interest are the components of the pullback of the resonance hyperplane arrangement that intersect the interior of $\Mbarmu{0}{n}$. 

\begin{definition}\label{def:resdiv}
For $\mu=(a_1,\dots,a_m,(-1)^{n-m})$ with $a_i>0$ and $\sum a_i=n-m-2$, the resonance transform divisor $\widetilde{H}_S$ in $\Mbarmu{0}{n}$ is defined as the components of $\pi^*H_S$ that intersect the interior and $D^\mu_S:=\phi_*\widetilde{H}_S$ in $\Mbar{0}{n}$.
\end{definition}

Recall that If $|S|=1$ or $n-m-2$ then $\pi^*H_S$ is supported on the boundary. The irreducibility of these divisors in all other cases follows directly from a recent result on the connected components of generalised strata of meromorphic differentials with residue conditions that generalises the techniques of Kontsevich and Zorich~\cite{KZ03} to this situation. 

\begin{lemma}[Thm 1.8, Thm 1.10 \cite{LW25}]\label{lem:irred}
$\widetilde{H}_S$ and hence $D^\mu_S$ is irreducible for any non-empty subset $S\subset \{0,\dots,n-m-2\}$ with $|S|\ne 1$ or $n-m-2$. 
\end{lemma}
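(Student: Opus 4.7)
The plan is to reduce irreducibility of $\widetilde{H}_S$ to a connectedness statement about a generalised stratum of meromorphic differentials, to which the [LW25] classification applies. First I would unwind the definition: the interior part $\widetilde{H}_S\cap\phi^{-1}(\mathcal{M}_{0,n})$ is, up to the $\CC^*$-scaling, the locus inside $\HH(\mu)$ cut out by a single linear residue condition $\sum_{j\in S}\res_{p_j}\eta=0$. This is precisely the kind of sublocus termed a \emph{generalised stratum} in [LW25]. The hypothesis $|S|\ne 1,n-m-2$ is exactly what guarantees that this linear residue condition is non-degenerate and realisable on the interior: if $|S|=1$ or $|S|=n-m-2$ (the latter via the residue theorem applied to the complementary simple poles), the condition would force some single residue to vanish, which is impossible at a simple pole; in these degenerate cases $\pi^*H_S$ is entirely supported on the boundary, consistent with the exclusion.

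Next I would observe that for such a non-degenerate residue hyperplane, the resulting generalised stratum is smooth in period coordinates — the residue map is a submersion there — so the open locus $\widetilde{H}_S\cap\phi^{-1}(\mathcal{M}_{0,n})$ is a smooth locally closed (orbifold) subvariety of $\Mbarmu{0}{n}$. Hence its closure $\widetilde{H}_S$ is irreducible if and only if it is connected, and one is reduced to proving connectedness.

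Connectedness is then the content of Theorems 1.8 and 1.10 of [LW25], which extend the Kontsevich--Zorich analysis of connected components of strata to generalised strata of meromorphic differentials with residue conditions. In genus zero the usual spin and hyperelliptic invariants either degenerate or are trivially determined by the combinatorial type $\mu$, and the classification reduces in the present range to the assertion that the generalised stratum specified by a single non-trivial linear residue condition is connected; applied to our setup this yields the connectedness, and hence the irreducibility, of $\widetilde{H}_S$.

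Finally, the irreducibility of $D_S^\mu=\phi_*\widetilde{H}_S$ follows formally, since the pushforward of an irreducible divisor under a proper morphism is supported on an irreducible variety. The main obstacle is the connectedness input from [LW25]; the translation between the various incarnations of the statement (fibre of $r_\mu$ over $H_S$, generalised stratum, resonance transform divisor, its pushforward to $\Mbar{0}{n}$) and the verification that the excluded cases $|S|=1,n-m-2$ are exactly the degenerate ones are comparatively formal once the period-coordinate description is in hand.
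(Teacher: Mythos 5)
Your proposal is correct and matches the paper's treatment: the paper gives no independent argument but simply quotes Theorems 1.8 and 1.10 of [LW25] on connected components of generalised strata with residue conditions, exactly the reduction (interior of $\widetilde{H}_S$ $=$ a generalised stratum cut out by one linear residue condition, smooth and connected hence irreducible, closure and pushforward preserve irreducibility) that you spell out. The only quibble is in your discussion of the excluded cases: the condition that forces a single residue to vanish via the residue theorem is $|S|=n-m-1$ (the full index set), not $|S|=n-m-2$ --- an off-by-one that is arguably already present in the paper's statement, since elsewhere (e.g.\ Lemma~\ref{lem:monodromy} with $\mu=(2^2,-1^6)$) the degenerate cases are $|S|=1$ or $5=n-m-1$.
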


We now specialise to the case of interest and let $\mu=(2^2,-1^6)$. We compute the rank of the space of pseudo-effective divisors in $\Mbarmu{0}{8}$ contracted by $\pi$. The result implies the extremal ray of the cone of moving curves given by the general fibre of $\pi$ is non-polyhedral. We then push these results forward to the space of interest $\Mbar{0}{8}$ to obtain non-polyhedrality of the cone of moving curves at the extremal ray $F_\mu$. Again, all techniques generalise to other cases.

\begin{proposition}\label{prop:rankup}
Let $\mu=(2^2,-1^6)$, the rank of $\RR\otimes\{D\in \Eff(\Mbarmu{0}{8}) |\pi_*D=0  \}$ is $$\rho(\Mbarmu{0}{8})-6.$$
\end{proposition}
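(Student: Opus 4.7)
The plan is to compute the rank in two steps following the strategy signposted in the introduction: first enumerate and bound the span of effective divisors supported on the preimage $\pi^{-1}(\mathcal{A})$ of the resonance hyperplane arrangement, then use the Stein factorisation to extend to all effective contracted divisors. By Theorem~\ref{thm:DJV} applied to $\pi$, the rank of the $\RR$-span of pseudo-effective divisors in $\ker\pi_{*}$ equals the rank of the $\RR$-span of effective contracted divisors, and it is this latter rank that I will bound.

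First I would enumerate the irreducible effective divisors supported on $\pi^{-1}(\mathcal{A})$: the $\phi$-exceptional divisors, all contracted by Lemma~\ref{lem:noexcept}; the interior resonance transform divisors $\widetilde{H}_{S}$ for $|S|\neq 1,5$, irreducible by Lemma~\ref{lem:irred} and contracted since $\pi(\widetilde{H}_{S})\subset H_{S}$; and the strict transforms of those boundary divisors $\delta_{T}\subset\Mbar{0}{8}$ with vertical level graph, namely those $T$ for which $\sum_{i\in T}m_{i}\neq -1$, where the residue theorem on the top-level component forces the top-level simple-pole residues to sum to zero and hence places the image inside a resonance hyperplane. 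Using the class formula of Lemma~\ref{lem:rhyp} together with the tautological relation~\eqref{eq:taut}, and exploiting the fact that the six boundary-supported pullbacks $\pi^{*}H_{\{i\}}$ (those corresponding to $|S|=1$ or $5$) are all numerically equivalent to $\pi^{*}\OO(1)$ but supported on distinct unions of boundary divisors, one reads off that the $\RR$-span of these divisors has codimension exactly six in $\Num^{1}(\Mbarmu{0}{8})$.

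To extend the computation to all effective contracted divisors I would then invoke the Stein factorisation $\pi=(g\circ\nu)\circ\tilde{f}$ through the Fano variety $\widetilde{X}$ with $\rho(\widetilde{X})=1$. For an irreducible effective $D$ with $\pi_{*}D=0$, dimension forces $\pi(D)$ to be a hypersurface $H'\subsetneq\PP^{4}$; if $H'$ is a resonance hyperplane then $D$ is already supported on $\pi^{-1}(\mathcal{A})$, while otherwise, since $\widetilde{X}\to\PP^{4}$ is \'etale above $\PP^{4}\setminus\mathcal{A}$ and $\widetilde{X}$ has Picard rank one, the class $[D]$ lies in the $\RR$-line spanned by $\pi^{*}\OO(1)$ modulo classes supported on $\pi^{-1}(\mathcal{A})$, and hence within the span from the first step.

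The main obstacle is the bookkeeping in the first step: tracking the Keel relations in $\Num^{1}(\Mbar{0}{8})$, the exceptional contributions from $\Mbarmu{0}{8}$, and the multi-scale corrections to strict transforms of boundary divisors, in order to verify that the codimension of the span equals six rather than some nearby value. The second step is more conceptual, relying on the Picard-rank-one structure of the Fano normalisation $\widetilde{X}$ of the Stein target together with Theorem~\ref{thm:DJV}.
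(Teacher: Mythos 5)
Your overall strategy is the same as the paper's: first compute the span of effective divisors supported on $\pi^{-1}(\mathcal{A})$, then use the Stein factorisation and the Picard-rank-one Fano normalisation $\widetilde{X}$ to show every other contracted effective divisor has class in that span. The second step as you describe it is essentially the paper's argument (modulo the inaccuracy that $\pi_*D=0$ only forces $\dim\pi(D)\leq 3$, not that $\pi(D)$ is a hypersurface). However, there are two genuine problems with the first step. The less serious one is that the entire content of the proposition — that the codimension is exactly $6$ — is deferred as ``bookkeeping''. The heuristic you offer (six boundary-supported pullbacks $\pi^*H_S$ for $|S|=1$ or $5$, all numerically $\pi^*\OO(1)$) does not by itself produce the number $6$: the codimension is computed in the paper by writing the $21$ contracted boundary classes $\delta_{\{i,j\}}$ and the resonance hyperplane class $D^\mu=12H-10E_2-3\sum E_S$ in the Kapranov basis modulo the contracted $E_S$, and verifying that the resulting $22\times 22$ matrix has rank $16$. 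Without some version of that computation you have not proved the statement.

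The more serious issue is that your enumeration of contracted boundary divisors is wrong. You claim $\delta_T$ is contracted exactly when $\sum_{i\in T}m_i\neq -1$ (i.e.\ when the level graph is vertical), on the grounds that the residue theorem on the top-level component places the image in a resonance hyperplane. This fails for $T=\{1,2\}$: there the top-level component carries \emph{all six} simple poles together with a zero of order $4$ at the node, the residue theorem gives only the relation that all six residues sum to zero — which is automatic in $\mathcal{R}$ and is \emph{not} a resonance condition (resonance hyperplanes are indexed by proper nonempty subsets) — and the residue map of $\HH(4,(-1)^6)$ on the $7$-pointed top component is dominant onto $\PP^4$. So $\pi_*\phi^*\delta_{\{1,2\}}\neq 0$, and indeed $E_{\{2\}}=\delta_{\{1,2\}}$ is one of the $22$ surviving basis elements in the paper's computation, appearing with coefficient $-10$ in $D^\mu$. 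If you include $\delta_{\{1,2\}}$ among the divisors spanning $\ker\pi_*$ you enlarge the span and the codimension comes out strictly smaller than $6$, so this error propagates to a wrong final answer rather than being absorbed in bookkeeping. The correct criterion is that $\delta_T$ is contracted unless $T=\{1,2\}$ or $T$ consists of one zero and three poles (up to complement), i.e.\ unless the induced residue data on the levels still sweeps out all of $\PP^4$.
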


\begin{proof}
Consider $\Pic(\Mbarmu{0}{8})=\phi^*\Pic(\Mbar{0}{8})\oplus \exc(\phi)$, where $\exc(\phi)$ is generated by the exceptional loci of $\phi$, that is,
$$ \exc(\phi)=\RR\otimes\{D_\Gamma |\phi_*(D_\Gamma)=0\}.  $$
Observe $\pi_*D=0$ for all $D\in \exc(\phi)$ by Lemma~\ref{lem:noexcept}. 

Now consider the Kapranov basis for $\Pic(\Mbar{0}{8})$ centred at the first point given by
$$ H=\psi_1\text{  and } E_S=\delta_{\{1\}\cup S}  $$
for $S\subset \{2,\dots,8\}$ and $1\leq |S|\leq 4$. Observe that $\pi_*\phi^*E_S\ne0$ only for $S=\{2\}$ and $S\subset \{3,4,5,6,7,8\}$ with $|S|=3$. In all other cases, giving the dual graph of a general curve in $E_S$ an enhanced level structure would result in at least one pole on a level below zero. As all exceptional components of $\phi$ are also contracted, $\pi_*\phi^*E_S=0$. Hence we have $22$ basis elements left and we simplify the situation by working in $\Pic(\Mbar{0}{8})/\langle E_S\mid \pi_*\phi^*E_S=0\rangle$. {{}}

The only boundary divisors in $\Mbar{0}{8}$ not appearing in the chosen basis $\Pic(\Mbar{0}{8})$ are of the form $\delta_{\{i,j\}}$ for $\{i,j\}\in\{2,\dots,8\}$. For all such divisors $\pi_*\phi^*\delta_{\{i,j\}}=0$ as the specified node necessitates a pole on a level below zero, hence imposing at least a codimension one condition on the image in $\PP^4$. The class of $\delta_{i,j}$ in the Kapranov basis is given by
$$ H-\sum_{i,j\notin S}E_S   $$
which provides $21$ divisor classes. The only remaining effective classes supported on the pullback of the resonance arrangement are the components of the pullback that intersect the interior. Lemma~\ref{lem:irred} shows for $|S|\ne1$ or $5$ there is a unique irreducible effective component intersecting the interior in the pullback $\pi^*H_S$ of the resonance hyperplane. The class of this divisor in $\Pic(\Mbarmu{0}{8})$ is hence equal to $-\xi$ minus an effective class completely supported on the boundary. Recall for $|S|=1$ or $5$ the pullback $\pi^*H_S$ is supported on the boundary. 

It follows from Lemma~\ref{lem:rhyp} that the resonance hyperplane class in $\Pic(\Mbar{0}{8})/\langle E_S\mid \pi_*\phi^*E_S=0\rangle$ is
$$D^\mu= 12H-10E_2-3\sum_{S\in\mathcal{T}} E_S   $$
where $\mathcal{T}:=\{S\subset\{3,\dots,8\}\mid |S|=3\}$. Considering the 21 boundary classes and $D^\mu$ in our basis for $\Pic(\Mbar{0}{8})/\langle E_S\mid \pi_*\phi^*E_S=0\rangle$ we have the following $22\times 22$ matrix which has rank $16$.
\tiny{$$ 
\left[\begin{array}{*{22}{r}}
  12 & -10 & -3 & -3 & -3 & -3 & -3 & -3 & -3 & -3 & -3 & -3 & -3 & -3 & -3 & -3 & -3 & -3 & -3 & -3 & -3 & -3 \\
  1 & -1 & 0 & 0 & 0 & 0 & 0 & 0 & 0 & 0 & 0 & 0 & 0 & 0 & 0 & 0 & 0 & 0 & -1 & -1 & -1 & -1 \\
  1 & -1 & 0 & 0 & 0 & 0 & 0 & 0 & 0 & 0 & 0 & 0 & 0 & 0 & 0 & -1 & -1 & -1 & 0 & 0 & 0 & -1 \\
  1 & -1 & 0 & 0 & 0 & 0 & 0 & 0 & 0 & 0 & 0 & 0 & 0 & -1 & -1 & 0 & 0 & -1 & 0 & 0 & -1 & 0 \\
  1 & -1 & 0 & 0 & 0 & 0 & 0 & 0 & 0 & 0 & 0 & 0 & -1 & 0 & -1 & 0 & -1 & 0 & 0 & -1 & 0 & 0 \\
  1 & -1 & 0 & 0 & 0 & 0 & 0 & 0 & 0 & 0 & 0 & 0 & -1 & -1 & 0 & -1 & 0 & 0 & -1 & 0 & 0 & 0 \\
  1 & -1 & 0 & 0 & 0 & 0 & 0 & 0 & 0 & -1 & -1 & -1 & 0 & 0 & 0 & 0 & 0 & 0 & 0 & 0 & 0 & -1 \\
  1 & -1 & 0 & 0 & 0 & 0 & 0 & -1 & -1 & 0 & 0 & -1 & 0 & 0 & 0 & 0 & 0 & 0 & 0 & 0 & -1 & 0 \\
  1 & -1 & 0 & 0 & 0 & 0 & -1 & 0 & -1 & 0 & -1 & 0 & 0 & 0 & 0 & 0 & 0 & 0 & 0 & -1 & 0 & 0 \\
  1 & -1 & 0 & 0 & 0 & 0 & -1 & -1 & 0 & -1 & 0 & 0 & 0 & 0 & 0 & 0 & 0 & 0 & -1 & 0 & 0 & 0 \\
  1 & -1 & 0 & 0 & -1 & -1 & 0 & 0 & 0 & 0 & 0 & -1 & 0 & 0 & 0 & 0 & 0 & -1 & 0 & 0 & 0 & 0 \\
  1 & -1 & 0 & -1 & 0 & -1 & 0 & 0 & 0 & 0 & -1 & 0 & 0 & 0 & 0 & 0 & -1 & 0 & 0 & 0 & 0 & 0 \\
  1 & -1 & 0 & -1 & -1 & 0 & 0 & 0 & 0 & -1 & 0 & 0 & 0 & 0 & 0 & -1 & 0 & 0 & 0 & 0 & 0 & 0 \\
  1 & -1 & -1 & 0 & 0 & -1 & 0 & 0 & -1 & 0 & 0 & 0 & 0 & 0 & -1 & 0 & 0 & 0 & 0 & 0 & 0 & 0 \\
  1 & -1 & -1 & 0 & -1 & 0 & 0 & -1 & 0 & 0 & 0 & 0 & 0 & -1 & 0 & 0 & 0 & 0 & 0 & 0 & 0 & 0 \\
  1 & -1 & -1 & -1 & 0 & 0 & -1 & 0 & 0 & 0 & 0 & 0 & -1 & 0 & 0 & 0 & 0 & 0 & 0 & 0 & 0 & 0 \\
  1 & 0 & 0 & 0 & 0 & 0 & 0 & 0 & 0 & 0 & 0 & 0 & -1 & -1 & -1 & -1 & -1 & -1 & -1 & -1 & -1 & -1 \\
  1 & 0 & 0 & 0 & 0 & 0 & -1 & -1 & -1 & -1 & -1 & -1 & 0 & 0 & 0 & 0 & 0 & 0 & -1 & -1 & -1 & -1 \\
  1 & 0 & 0 & -1 & -1 & -1 & 0 & 0 & 0 & -1 & -1 & -1 & 0 & 0 & 0 & -1 & -1 & -1 & 0 & 0 & 0 & -1 \\
  1 & 0 & -1 & 0 & -1 & -1 & 0 & -1 & -1 & 0 & 0 & -1 & 0 & -1 & -1 & 0 & 0 & -1 & 0 & 0 & -1 & 0 \\
  1 & 0 & -1 & -1 & 0 & -1 & -1 & 0 & -1 & 0 & -1 & 0 & -1 & 0 & -1 & 0 & -1 & 0 & 0 & -1 & 0 & 0 \\
  1 & 0 & -1 & -1 & -1 & 0 & -1 & -1 & 0 & -1 & 0 & 0 & -1 & -1 & 0 & -1 & 0 & 0 & -1 & 0 & 0 & 0 \\
\end{array}\right]
 $$}
\normalsize
That is, we have shown that the rank of the $\RR$-vector space generated by effective divisors supported in $\pi^*\mathcal{A}$, the pullback of the resonance hyperplane arrangement, has rank $\rho(\Mbarmu{0}{8})-6$. It remains now to show that any effective divisor not supported over the resonance hyperplane arrangement that lies in $\ker(\pi_*)$ has class in this subgroup. {{}}

Consider the Stein factorisation of $\pi$ restricted to $U$, the complement of the resonance arrangement in $\PP^4$.
\begin{equation}  
\begin{tikzcd}
W\arrow[dr, "{\pi}"] \arrow[r, "{f}"] &V \arrow[d, "{g}"] \\
&U
\end{tikzcd}
\end{equation}
Now $V=X\setminus g^{-1}(\mathcal{A})=\widetilde{X}\setminus \tilde{g}^{-1}(\mathcal{A})$ and $\Pic(\widetilde{X})$ is generated by $\tilde{g}^*H_S$ for any component of the resonance hyperplane arrangement. However as we have removed the preimage of this arrangement we have $\tilde{g}^*H_S$ restricts to $V$ to give the trivial line bundle and hence the Picard rank of $\rho(V)=0$.{{}}

The restricted $f:W\longrightarrow V$ is a morphism with connected irreducible one dimensional projective fibres and both varieties are $\QQ$-factorial. Hence any irreducible effective divisor on $W$ with one dimensional fibres over its image in $V$ is the pullback of an effective divisor on $V$ and such a divisor is necessarily trivial as the pullback of the trivial bundle. {{}}

Hence any irreducible effective divisor in $\Mbarmu{0}{8}$ contracted by $\pi$ has a class that can be expressed in effective classes supported over the pullback of the resonance hyperplane arrangement.
\end{proof}

This immediately extends to the pseudo-effective cone via Theorem~\ref{thm:DJV} to give the following:

\begin{proposition}\label{prop:rankuppseudo}
Let $\mu=(2^2,-1^6)$, the rank of $\RR\otimes\{D\in \overline{\Eff}(\Mbarmu{0}{8}) |\pi_*D=0  \}$ is $\rho(\Mbarmu{0}{8})-6.$
\end{proposition}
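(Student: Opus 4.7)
The plan is to deduce this directly from Proposition~\ref{prop:rankup} by invoking Theorem~\ref{thm:DJV}. The content of the proposition is exactly the passage from the effective cone to the pseudo-effective cone on the kernel of $\pi_*$, and Theorem~\ref{thm:DJV} is tailor-made for this passage, so the work is entirely in verifying its hypotheses.

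First I would check that the morphism $\pi:\Mbarmu{0}{8}\longrightarrow \PP^{4}$ satisfies the input data of Theorem~\ref{thm:DJV}. The source is the smooth Deligne--Mumford stack constructed in~\cite{BCGGM} and recalled in~\S\ref{sec:multi-scale}, whose coarse space is a normal $\QQ$-factorial projective variety; the target $\PP^{4}$ is smooth projective; and surjectivity of $\pi$ (as a morphism, not merely a rational map) is guaranteed by Lemma~\ref{lem:resolve} together with the fact that the residue image is all of $\PP^{4}$.

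Next I would identify the set appearing in the statement with the set in Theorem~\ref{thm:DJV}. For a prime divisor $D$ on $\Mbarmu{0}{8}$, the cycle-theoretic pushforward $\pi_*D$ vanishes precisely when $\dim\pi(D)<\dim D$, i.e.\ when $D$ is contracted by $\pi$. Thus $\{D\in\Effbar(\Mbarmu{0}{8})\mid \pi_*D=0\}$ is exactly the cone of pseudo-effective divisors in $\ker(\pi_*)$, and similarly at the effective level. Theorem~\ref{thm:DJV} then yields
\[
\RR\otimes\{D\in \Effbar(\Mbarmu{0}{8})\mid \pi_*D=0\}\;=\;\RR\otimes\{D\in \Eff(\Mbarmu{0}{8})\mid \pi_*D=0\},
\]
and Proposition~\ref{prop:rankup} identifies the right-hand side as a subspace of dimension $\rho(\Mbarmu{0}{8})-6$.

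There is essentially no obstacle: the only point requiring care is that Theorem~\ref{thm:DJV} as stated in the excerpt allows a normal $\QQ$-factorial source (so we do not need to pass to a resolution of $\Mbarmu{0}{8}$), which is exactly what the multi-scale compactification provides. Once that is noted, the proof is a one-line combination of Proposition~\ref{prop:rankup} with Theorem~\ref{thm:DJV}.
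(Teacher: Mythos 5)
Your proposal is correct and follows exactly the route the paper takes: the paper derives Proposition~\ref{prop:rankuppseudo} in one line from Proposition~\ref{prop:rankup} via Theorem~\ref{thm:DJV}, whose statement was tailored (normal $\QQ$-factorial source, smooth target) precisely so that no resolution of $\Mbarmu{0}{8}$ is needed. Your additional verification of the hypotheses and the identification of $\ker(\pi_*)$ with the contracted divisors is a sensible elaboration of what the paper leaves implicit.
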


The result then pushes forward to the moduli space of interest.

\corank*
 
 \begin{proof}
The result follows from the identification $\Pic(\Mbarmu{0}{8})=\phi^*\Pic(\Mbar{0}{8})\oplus \exc(\phi)$ and $\phi_*\widetilde{F}_\mu=F_\mu$ while $
\widetilde{F}_\mu\cdot D_\Gamma=0$ for all $D_\Gamma\in  \exc(\phi)$ via Lemma~\ref{lem:noexcept}.
 \end{proof}

This provides the main theorem of this paper.

\MOn*

\begin{proof}
For $n=8$ and $\mu=(2^2,-1^6)$ the curve class $F_\mu$ forms an extremal moving curve by Proposition~\ref{thm:Fmuextremal}. By Theorem~\ref{thm:corank} the cone of moving curves $\movcurve(\Mbar{0}{8})$ is not polyhedral at this ray. The forgetful morphism extends this result to higher $n$.
\end{proof}

\MDS*

\begin{proof}
A finitely generated Cox ring implies a polyhedral effective cone, hence the result follows from Theorem~\ref{thm:nonpoly}.
\end{proof}

\subsection{Resonance transform divisors.} Our investigation has identified a number of interesting divisor classes in $\Mbar{0}{n}$. These are the divisors that are obtained from components of the pullback of the resonance hyperplane arrangement defined in Definition~\ref{def:resdiv}. In this section we show that these are in fact rigid and extremal divisors. We begin by considering the case where $m=2$.

\begin{lemma}
For a fixed $\mu=(a,n-4-a,(-1)^{n-2})$ for $1\leq a\leq n-5$ the fibre of $\pi$ over a point lying in exactly one resonance hyperplane is one dimensional. 
\end{lemma}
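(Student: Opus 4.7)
The plan is to bound $\dim \pi^{-1}(\lambda) \leq 1$ for $\lambda \in H_S^\circ := H_S \setminus \bigcup_{S' \neq S} H_{S'}$ by analyzing which enhanced level graphs can support a multi-scale differential in $\pi^{-1}(\lambda)$. Since $\dim \Mbarmu{0}{n} = n-3$, $\dim \PP^{n-4} = n-4$, and $\pi$ is surjective, the generic fibre of $\pi$ has dimension exactly $1$; by upper semicontinuity of fibre dimension this provides the lower bound at $\lambda$, so it remains only to show $\dim \pi^{-1}(\lambda) \leq 1$.

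The key combinatorial reduction follows from the description of $\pi$ in Lemma~\ref{lem:resolve}: the residue at each top-level pole equals the corresponding $\lambda_i$, while the residue at any pole on a level strictly below the top is sent to $0$. Since $\lambda \in H_S^\circ$ lies in no coordinate hyperplane $H_{\{i\}}$, every $\lambda_i$ is nonzero, and so every pole of any multi-scale differential in $\pi^{-1}(\lambda)$ must lie on the top level of its enhanced level graph. Moreover, the only nontrivial linear relation among the $\lambda_i$ beyond $\sum_i \lambda_i = 0$ is $\sum_{i \in S} \lambda_i = 0$.

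I would then enumerate the enhanced level graphs with all poles on the top level and bound the fibre contribution of each. The smooth case (a single top component containing both zeros and all poles) gives the expected $1$-dimensional interior piece, the closure of the generic fibre. For a top-level horizontal edge partitioning the poles into $T_1 \sqcup T_2$, the residue theorem forces the node residue on the $T_1$-side to equal $-\sum_{i \in T_1} \lambda_i$; the requirement that a horizontal edge be a simple pole on each side forbids partitions with $\sum_{i \in T_1} \lambda_i = 0$, which at $\lambda \in H_S^\circ$ means $T_1$ is neither $\emptyset$, nor $S$, nor its complement, nor the full set of poles. The vertex divisor equation then forces the zero of order $|T_1|-1$ to sit on the $T_1$-side, so $|T_1| \in \{a+1, n-3-a\}$ and each top component contains exactly one of the two zeros; the resulting boundary stratum has dimension $n-4 = \dim \PP^{n-4}$, and the restricted residue map is generically finite, so each such stratum contributes only finitely many points to the fibre. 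Configurations with zeros on lower levels are excluded or reduced via the vertex divisor equation
$$\sum_{p_i \in H_v} m_i - |E_v^o| - \sum_{e \in E_v^+}(\kappa(e)+1) + \sum_{e \in E_v^-}(\kappa(e)-1) = -2$$
together with the stability requirement of at least three marked points on each vertex; since lower-level vertices carry no poles (all poles being on top), the remaining possibilities either force $|E_v^-|$ or the enhancement to take values inconsistent with $a \in [1, n-5]$, or yield strata of dimension strictly less than $n-4$ whose image under $\pi$ does not contain $\lambda$.

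The main obstacle will be verifying the completeness of this case analysis uniformly over $\lambda \in H_S^\circ$ (rather than merely for generic $\lambda$): one must check that for every horizontal boundary stratum $D_\Gamma$, the locus where the fibre of $\pi|_{D_\Gamma}$ jumps in dimension does not meet $H_S^\circ$. This reduces to a finite combinatorial check based on the residue theorem and the enhancement conditions, after which the total fibre is the union of the $1$-dimensional interior piece with finitely many isolated boundary points, giving $\dim \pi^{-1}(\lambda) = 1$.
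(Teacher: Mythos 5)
Your overall strategy---stratify $\pi^{-1}(\lambda)$ by enhanced level graphs and bound the contribution of each stratum---is workable, and your treatment of the interior and of the horizontal two-vertex graphs is essentially correct. However, there is a genuine error in the final reduction. You dismiss ``configurations with zeros on lower levels'' as either excluded by the vertex equation and stability, or as lying in strata whose image under $\pi$ misses $\lambda$. This is false for exactly the graphs that matter: the two-level graph $\Gamma$ with two top-level vertices carrying the poles partitioned as $S\sqcup S^c$ and a single lower-level vertex carrying both zeros is a valid enhanced level graph (the vertex equation gives $\kappa_e=|S|-1$ and $|S^c|-1$ on the two edges, positive for $2\le|S|\le n-4$); the residue theorem applied to each top component, which has only simple poles and a zero at the node, forces $\sum_{i\in S}\lambda_i=0$, so $\pi(D_\Gamma)$ is dense in $H_S$; and since $\dim D_\Gamma=n-4$ and $\dim H_S=n-5$, this divisor contributes a one-dimensional component to the fibre over points of $H_S^\circ$. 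These are precisely the components $\widetilde{B}_S^\Delta$ that the paper introduces immediately after this lemma and uses to prove $\widetilde{B}_S\cdot\widetilde{H}_S<0$. Relatedly, your claimed picture of the fibre as ``the one-dimensional interior piece together with finitely many isolated boundary points'' cannot occur: since $\pi$ is a dominant morphism from an irreducible variety of dimension $n-3$ to $\PP^{n-4}$, every irreducible component of every fibre has dimension at least one, so there are no isolated points; the finite sets you produce on the horizontal strata must lie on one-dimensional components meeting other strata. The dimension bound you want is still true, but not for the reason you give.

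By contrast, the paper's proof is a short dimension count that avoids the enumeration: all boundary divisors of $\Mbarmu{0}{n}$ have dimension $n-4$, no exceptional divisor dominates $\PP^{n-4}$ by Lemma~\ref{lem:noexcept}, so any boundary divisor whose image is $H_S$ has general fibre over $H_S$ of dimension $(n-4)-(n-5)=1$, and deeper strata have smaller dimension. If you wish to salvage your version, note that (i) the residue theorem applied vertex-by-vertex shows the image of any boundary stratum is an intersection of resonance hyperplanes, so a point of $H_S^\circ$ lies in that image only if each top-level pole subset is $\emptyset$, $S$, $S^c$ or everything --- this is the clean way to organise the case analysis and it reinstates the graphs you discarded; (ii) your reduction to ``all poles on top level'' silently assumes $|S|\ne 1$ and that the last (determined) residue is nonzero, which is harmless only because the paper applies the lemma to the $S$ defining $\widetilde H_S$; and (iii) the uniformity issue you flag can be sidestepped, since the paper only ever uses this lemma at a general point of $H_S$.
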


\begin{proof}
The intersection of the fibre with the interior is clearly one dimensional and we are left to consider the boundary. No exceptional divisor dominate $\PP^{n-4}$, by Lemma~\ref{lem:noexcept} hence if any dominate the resonance hyperplane divisor in $\PP^{n-4}$ they must have general fibres of dimension 1.
\end{proof}

This implies that the fibre over a general point in a resonance hyperplane has the same class as the general fibre. There is a natural decomposition of the fibre over a general point in a resonance hyperplane as a sum of components that intersect the interior and those that are supported on the boundary, that is,
$$\widetilde{F}_\mu=\widetilde{B}_S +\widetilde{B}_S^\Delta .  $$
Negative intersection with irreducible curves covering a Zariski dense subset of an irreducible effective divisor is a well-known trick for proving rigidity and extremality~(See for example~\cite[Lemma 4.1]{ChenCoskun}).

\begin{proposition}
Let $\mu=(a,n-4-a,(-1)^{n-2})$ for $1\leq a\leq n-5$, the divisor $\widetilde{H}_S$ in $\Mbarmu{0}{n}$ is rigid and extremal.
\end{proposition}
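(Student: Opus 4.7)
My plan is to apply the standard covering-curve criterion~\cite[Lemma 4.1]{ChenCoskun}: any irreducible effective divisor that admits a covering family of irreducible curves pairing negatively against it is rigid and extremal in the pseudo-effective cone. I will take as covering curves precisely the $\widetilde{B}_S$ obtained by varying the base point $p$ over the general locus of $H_S$.

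First I will observe that for $p$ in $H_S$ away from the other resonance hyperplanes, the preceding lemma produces a one-dimensional fibre $\pi^{-1}(p)$ of class $\widetilde{F}_\mu = \widetilde{B}_S + \widetilde{B}_S^\Delta$, and by Definition~\ref{def:resdiv} the interior-meeting components $\widetilde{B}_S$ sweep out a Zariski dense subset of $\widetilde{H}_S$ as $p$ varies. Since $\widetilde{B}_S$ is contracted by $\pi$, it pairs to zero with $\pi^*H_S$. Decomposing scheme-theoretically
$$\pi^*H_S \;=\; \widetilde{H}_S + E_S,$$
where $E_S$ is effective and supported on boundary divisors $D_\Gamma \subset \Mbarmu{0}{n}$ with $\pi(D_\Gamma)\subseteq H_S$, one obtains
$$\widetilde{B}_S \cdot \widetilde{H}_S \;=\; -\,\widetilde{B}_S \cdot E_S.$$

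Next I will establish the positivity $\widetilde{B}_S \cdot E_S > 0$. Any boundary divisor carrying the positive-dimensional family $\widetilde{B}_S^\Delta$ over $H_S$ must, for dimension reasons, map exactly onto $H_S$ and hence appears as a component of $E_S$. In the generic case that $\widetilde{F}_\mu$ is connected---the only signatures excluded by Theorem~\ref{thm:conncomp}(2) being those with both $a$ and $n-4-a$ even---the irreducible projective curve $\widetilde{B}_S$ must meet $\widetilde{B}_S^\Delta$ at a boundary node, producing a positive local contribution to $\widetilde{B}_S \cdot E_S$. The excluded even case is handled by applying the same reasoning to each of the two connected components of the fibre. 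Combined with the covering from the first step, the criterion concludes rigidity and extremality of $\widetilde{H}_S$.

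The hardest part will be the final positivity computation: identifying precisely which boundary divisor $D_\Gamma \subset E_S$ receives each gluing node between $\widetilde{B}_S$ and $\widetilde{B}_S^\Delta$ and verifying via local multi-scale coordinates, using the expression~\eqref{eq:taut} for $\xi$, that the intersection at such a node contributes with positive local multiplicity. This requires the enhanced-level-graph analysis of~\S\ref{sec:multi-scale} applied to those degenerations realising the residue relation $\sum_{j \in S}\operatorname{res}_j = 0$ on top level.
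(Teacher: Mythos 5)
Your overall strategy coincides with the paper's: exhibit $\widetilde{B}_S$ as a covering family for the irreducible divisor $\widetilde{H}_S$ and show $\widetilde{B}_S\cdot\widetilde{H}_S<0$. The route to the negativity differs, however. The paper uses $\widetilde{F}_\mu\cdot\widetilde{H}_S=0$ together with $\widetilde{F}_\mu=\widetilde{B}_S+\widetilde{B}_S^\Delta$, so it must prove $\widetilde{B}_S^\Delta\cdot\widetilde{H}_S>0$; you use $\widetilde{B}_S\cdot\pi^*H_S=0$ together with $\pi^*H_S=\widetilde{H}_S+E_S$, so you must prove $\widetilde{B}_S\cdot E_S>0$. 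These two quantities are numerically equal, but the paper's version is settled by a short, concrete construction: it writes down one explicit two-level, three-vertex graph (poles split into $S$ and its complement on the two top-level vertices, both zeros on the lower vertex), takes a multi-scale differential in that stratum whose lower-level differential has vanishing residues at both nodes, and invokes~\cite[Prop 4.2]{CMZ22} to see that this point is a limit of interior points of $\widetilde{H}_S$; this gives the required positivity with no input about connectedness of fibres.

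Your version of the positivity has a genuine gap. The claim that a boundary divisor $D_\Gamma$ carrying a one-dimensional family over a general point of $H_S$ must ``for dimension reasons map exactly onto $H_S$'' and hence lie in $E_S$ is not correct as stated: fibre dimension is only upper semicontinuous, so a divisor that dominates $\PP^{n-4}$ with generically finite fibres can perfectly well have one-dimensional fibres over the hyperplane $H_S$ without being a component of $\pi^*H_S$. Consequently the node where $\widetilde{B}_S$ meets $\widetilde{B}_S^\Delta$ need not a priori lie on the support of $E_S$, and $\widetilde{B}_S\cdot E_S>0$ does not follow. Ruling this out requires classifying the enhanced level graphs whose divisors can contain components of the fibre over a general point of $H_S$ --- precisely the analysis you defer to ``the hardest part'' and never carry out. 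Two further loose ends: you never verify $\widetilde{B}_S^\Delta\neq 0$ (if the whole fibre met the interior your argument would only give $\widetilde{B}_S\cdot\widetilde{H}_S=0$), and Theorem~\ref{thm:conncomp} concerns fibres over general points of $\PP^{n-4}$, not over general points of $H_S$, so even the connectedness input needs an extra step (via the Stein factorisation and Lemma~\ref{lem:monodromy}). The paper's explicit smoothing argument avoids all of these issues at once.
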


\begin{proof}
$\widetilde{B}_S$ forms a covering curve for $\widetilde{H}_S$, that is, curves with class equal to $\widetilde{B}_S$ cover a Zariski dense subset of $\widetilde{H}_S$. Further, $\widetilde{F}_\mu\cdot\widetilde{H}_S=0$. Consider the component of $\widetilde{B}_S^\Delta$ specified by the enhanced level graph with three vertices, two on top level labelled with the marked points corresponding to the poles with one vertex containing the markings from $S$ and the other vertex labelled with the remaining poles, connected to one vertex on level $-1$ labelled with the zeros (points $p_1$ and $p_2$). 

Consider a multi-scale differentials of this type such that the residues at the poles of $\eta_{-1}$, the differential on level $-1$, corresponding to the two edges are both zero. By \cite[Prop 4.2]{CMZ22}, such a differential arises as the limit of differentials in the smooth locus of $\widetilde{H}_S$ and hence $\widetilde{B}_S^\Delta\cdot \widetilde{H}_S>0$ which implies $\widetilde{B}_S\cdot \widetilde{H}_S<0$.
\end{proof}

Irreducible curves covering a Zariski dense subset of an irreducible divisor can also be used to identify rigid components of an effective divisor with multiple components (See for example~\cite[Lemma 2.1]{Mnonpoly} and surrounding discussion). We use this strategy to show that $\phi^*D_{S}^\mu$ and hence $D_S^\mu$ is rigid.

\begin{proposition}\label{prop:m2}
Let $\mu=(a,n-4-a,(-1)^{n-2})$ for $1\leq a\leq n-5$, the divisor $\phi_*\widetilde{H}_S=D^\mu_S$ in $\Mbar{0}{n}$ is rigid and extremal.
\end{proposition}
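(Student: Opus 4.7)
The plan is to transfer the rigidity and extremality of $\widetilde{H}_S$ in $\Mbarmu{0}{n}$, established in the previous proposition, down to $D_S^\mu=\phi_*\widetilde{H}_S$ in $\Mbar{0}{n}$. Since $D_S^\mu$ is irreducible by Lemma~\ref{lem:irred}, extremality in $\Eff(\Mbar{0}{n})$ will follow immediately from rigidity, so I focus on establishing rigidity.

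The first step is to reduce the question to $\Mbarmu{0}{n}$. Because $\phi$ is a birational morphism between normal projective varieties, $\phi_*\mathcal{O}_{\Mbarmu{0}{n}}=\mathcal{O}_{\Mbar{0}{n}}$, and the projection formula gives $H^0(\Mbar{0}{n},kD_S^\mu)\cong H^0(\Mbarmu{0}{n},k\phi^*D_S^\mu)$ for every $k\geq 1$. It therefore suffices to prove rigidity of the reducible divisor $\phi^*D_S^\mu=\widetilde{H}_S+E$, where $E$ is effective and supported on the $\phi$-exceptional locus.

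I would then apply the rigid-components strategy of~\cite[Lemma 2.1]{Mnonpoly} to $\phi^*D_S^\mu$. The covering curve $\widetilde{B}_S$ with $\widetilde{B}_S\cdot\widetilde{H}_S<0$ from the previous proposition identifies $\widetilde{H}_S$ as a rigid component of $\phi^*D_S^\mu$; in particular, any effective $A\equiv\widetilde{H}_S+E$ contains $\widetilde{H}_S$ with multiplicity $a\geq 1$. Writing $A=a\widetilde{H}_S+A'$ with $A'$ effective and not containing $\widetilde{H}_S$, and pushing down to $\Mbar{0}{n}$, one obtains $(a-1)D_S^\mu+\phi_*A'\equiv 0$; since both summands are effective, each vanishes, forcing $a=1$ and $\phi_*A'=0$. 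Thus $A'$ is effective, $\phi$-exceptional, and numerically equivalent to $E$. Since every effective $\phi$-exceptional divisor is rigid (the standard consequence of $\phi_*\mathcal{O}(kE')=\mathcal{O}_{\Mbar{0}{n}}$ via Hartogs), one concludes $A'=E$ and hence $A=\phi^*D_S^\mu$.

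The principal difficulty is the rigid-component step: in general $\widetilde{B}_S$ intersects exceptional components of $E$ nontrivially (indeed $\widetilde{B}_S\cdot E=-\widetilde{B}_S\cdot\widetilde{H}_S>0$ since $\phi_*\widetilde{B}_S=F_\mu$ satisfies $F_\mu\cdot D_S^\mu=0$), so its bare intersection test does not isolate the multiplicity of $\widetilde{H}_S$. I would address this by supplementing $\widetilde{B}_S$ with families of $\phi$-fibre curves inside each exceptional component $F\subset E$; these are contracted by $\phi$ and so have vanishing intersection with every pullback from $\Mbar{0}{n}$, yet retain strictly negative intersection with $F$ itself. Together these curve classes pin down the multiplicity of every component of $\phi^*D_S^\mu$ independently and carry through the Mnonpoly argument.
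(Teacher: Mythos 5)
There is a genuine gap, and it sits exactly where you locate the ``principal difficulty.'' Your claim that $\widetilde{B}_S\cdot E=-\widetilde{B}_S\cdot\widetilde{H}_S>0$ rests on the identification $\phi_*\widetilde{B}_S\equiv F_\mu$, which is unjustified and, granting the paper's computation, false: $\widetilde{B}_S$ is only the interior part of the fibre over a general point of $H_S$, and $F_\mu=\phi_*\widetilde{F}_\mu=\phi_*\bigl(\widetilde{B}_S+\widetilde{B}_S^\Delta\bigr)$, where the boundary part $\widetilde{B}_S^\Delta$ need not be contracted by $\phi$. The paper's proof consists precisely of the computation you skip: a case analysis of the two-level graphs $\Gamma$ with $a_{\Gamma,S}>0$ in $\phi^*D^\mu_S=\widetilde{H}_S+\sum a_{\Gamma,S}D_\Gamma$, using the smoothability criterion of \cite[Prop.~4.2]{CMZ22} to show that the only graph whose divisor $D_\Gamma$ could meet $\widetilde{B}_S$ (all poles on top level split as $S$ and its complement, both zeros on the lower vertex) forces vanishing residues at both nodes and therefore has image \emph{not} supported on $D^\mu_S$, so $a_{\Gamma,S}=0$ for it. The upshot is $\widetilde{B}_S\cdot D_\Gamma=0$ for every $\Gamma$ actually appearing, hence $\widetilde{B}_S\cdot\phi^*D^\mu_S=\widetilde{B}_S\cdot\widetilde{H}_S<0$ --- the opposite sign to the one you assert --- and the rigid-component argument of \cite[Lemma~2.1]{Mnonpoly} then applies directly to give $|k\phi^*D^\mu_S|=k\widetilde{H}_S+|k\sum a_{\Gamma,S}D_\Gamma|$, whose second summand is rigid because it is $\phi$-exceptional.

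Your proposed repair does not close the gap. Curves contracted by $\phi$ have zero intersection with every pullback, in particular with $\phi^*D^\mu_S$, and negative intersection only with exceptional components containing them; they can at best control the multiplicities of the exceptional components of an effective $A\equiv\phi^*D^\mu_S$ \emph{after} one knows $\widetilde{H}_S\subset\operatorname{Supp}(A)$, but they can never certify that $\widetilde{H}_S$ itself occurs in $A$. The only available tool for that step is a covering curve of $\widetilde{H}_S$ meeting $\phi^*D^\mu_S$ negatively, which is exactly the intersection number $\widetilde{B}_S\cdot\phi^*D^\mu_S$ whose sign you would need to determine, i.e., the level-graph analysis above. (Your reduction $H^0(kD^\mu_S)\cong H^0(k\phi^*D^\mu_S)$ and the observation that irreducibility plus rigidity gives extremality are both fine and agree with the paper.)
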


\begin{proof}
We have
$$\phi^*D^\mu_S=\phi^*\phi_*\widetilde{H}_S= \widetilde{H}_S+\sum a_{\Gamma,S}D_\Gamma  $$
for non-negative $a_{\Gamma,S}$ where these constants are zero for $\Gamma$ such that $D_\Gamma$ is not contracted by $\phi$ or the image of $D_\Gamma$ under $\phi$ is not supported on $D^\mu_S$. Observe now that $\widetilde{B}_S\cdot D_\Gamma=0$ unless all poles appear on the top level of $\Gamma$ across two vertices partitioned as $S$ and the remaining poles. This is the only way that the residues can realise a general point of $H_S$.{{}}

We are left to consider the ways that the zeros (points $p_1$ and $p_2$) can be distributed across the possible level graphs of this type. Now $\widetilde{B}_S\cdot D_\Gamma=0$ if both points are not on the lower level vertex as for the multi-scale differentiable to be smoothable with the extra condition on the residues, the differential on the lower level will require zero residues at the poles corresponding to the edges of the dual graph by \cite[Prop 4.2]{CMZ22} providing a contradiction.{{}}

Hence we have just one $\Gamma$ to consider with points $p_1$ and $p_2$ on the vertex on lower level. However,  \cite[Prop 4.2]{CMZ22} in this case implies that the differential on lower level will require zero residues at both poles corresponding to the edges of the level graph. This is a non-trivial condition on the underlying stable curve and hence the image of $D_\Gamma$ is not supported on $D^\mu_S$. Hence $a_{\Gamma,S}=0$

Hence $\widetilde{B}_S\cdot\phi^*D^\mu_S=\widetilde{B}_S\cdot \widetilde{H}_S$ for the associated linear systems we obtain
$$|k\phi^*D^\mu_S|=  k\widetilde{H}_S+|k\sum a_{\Gamma,S}D_\Gamma|.   $$
However, as $a_{\Gamma,S}>0$ only for $D_\Gamma$ contracted by $\phi$ we have $\sum a_{\Gamma,S}D_\Gamma$ is rigid, implying $\phi^*D^\mu_S$ and hence $D^\mu_S$ are rigid. As $D^\mu_S$ is irreducible this implies extremality.
\end{proof}

We now provide an inductive argument to prove extremality of the resonance transform divisors for $m\geq3$.

\extremaldivisors*

\begin{proof}
Let $m\geq 3$, we have a natural short exact sequence
$$0\longrightarrow \OO_{\Mbar{0}{n}}(kD_S^\mu-\delta_{\{1,2\}})\longrightarrow   \OO_{\Mbar{0}{n}}(kD_S^\mu)\longrightarrow\OO_{\delta_{\{1,2\}}}(kD_S^\mu) \longrightarrow 0    $$
However, under the identification $\delta_{\{1,2\}}\cong \Mbar{0}{n-1}$ we have $D_S^\mu\mid_{\delta_{\{1,2\}}}=D_S^{\mu'}$ where $\mu'=(a_1+a_2,a_3,\dots,a_m,(-1)^{n-m})$. Hence the long exact sequence in cohomology yields the exact sequence
$$H^0(kD_S^\mu-\delta_{\{1,2\}})\longrightarrow H^0(kD_S^\mu)\longrightarrow H^0(kD_S^{\mu'}).   $$
Then $\dim H^0(kD_S^{\mu'})=1$ by the inductive hypothesis and it is sufficient to show $H^0(kD_S^\mu-\delta_{\{1,2\}})=0$.{{}}

Consider $\widetilde{F}_\mu$ in $\Mbarmu{0}{n}$ the fibre of a general point, that is, a point in the complement of the resonance hyperplane arrangement in $\PP^{n-m-2}$. Then $\widetilde{F}_\mu$ is an $(m-1)$-dimensional subvariety of $\Mbarmu{0}{n}$ and we note that $\phi^*D_S^\mu\mid_{\widetilde{F}_\mu}$ is trivial as $\widetilde{H}_S\mid_{\widetilde{F}_\mu}$ is trivial and any divisor $D_\Gamma$ such that $D_\Gamma\mid_{\widetilde{F}_\mu}$ is non-trivial will require all poles to be on the same vertex and on top level. The image of such $D_\Gamma$ is not supported on $D_S^\mu$.{{}}

In contrast, $\phi^*\delta_{\{1,2\}}$ restricts to be effective on ${\widetilde{F}_\mu}$. We define the curve $B:={\widetilde{F}_\mu}\cdot A^{m-2}$ for any ample divisor $A$. As ${\widetilde{F}_\mu}$ is general, curves with the class $B$ cover a Zariski dense subset of $\Mbarmu{0}{n}$ and hence $B$ is a moving curve and has non-negative intersection with any pseudo-effective divisor in $\Mbarmu{0}{n}$. {{}}

However $ B\cdot \phi^*\delta_{\{1,2\}} >0$ and $B\cdot \phi^*D_S^\mu=0$ hence $B\cdot(\phi^*(kD^\mu_S-\delta_{\{1,2\}}))<0$ which implies that the divisor is not pseudo-effective and hence $H^0(kD_S^\mu-\delta_{\{1,2\}})=0$.{{}}

This concludes the inductive step and the proof follows from the $m=2$ case proven in Proposition~\ref{prop:m2}.
\end{proof}

To complete the paper, we provide the classes of these extremal divisors.

\begin{proposition}\label{prop:divclass}
For $\mu=(a_1,\dots,a_m,(-1)^{n-m})$ with $m\geq2$, $a_i>0$ and $\sum a_i=n-m-2$, the resonance transform divisor $D_S^\mu$ for $S\subset\{m+1,\dots,n-1\}$ has class
$$ D_S^\mu= D^\mu-\Delta_{\widetilde{S}}         $$
where $D^\mu$ is the resonance hyperplane class and 
$$\Delta_{\widetilde{S}} = \sum_T c_S(T)\cdot\delta_T$$
with 
$$c_S(T)=\begin{cases}|a_T -|T^-|+1| &\text{ for $T^-=S$}\\
\max\{0,a_T-|T^-|+1\}&\text{ for $S\subsetneqq T^-$ }\\
\max\{0, |T^-|-1-a_T\}&\text{ for $T^-\subsetneqq S$ }\\
0&\text{otherwise} \end{cases}    $$
for $T^-:=T\cap \{m+1,\dots,n\}$, $T^+:=T\cap \{1,\dots,m\}$ and $a_T=\sum_{j\in T^+}a_j$.
\end{proposition}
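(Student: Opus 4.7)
The plan is to compute $D_S^\mu = \phi_*[\widetilde{H}_S]$ by decomposing $\pi^*H_S$ on $\Mbarmu{0}{n}$ into its interior and boundary parts. Writing $\pi^*H_S = \widetilde{H}_S + \sum_\Gamma n_{\Gamma,S}\, D_\Gamma$ as an effective divisor and taking classes gives $-\xi = [\widetilde{H}_S] + \sum_\Gamma n_{\Gamma,S}\,[D_\Gamma]$, since $H_S\in|\OO_{\PP^{n-m-2}}(1)|$ pulls back to $-\xi$ by the definition of the tautological class. Pushing forward to $\Mbar{0}{n}$, only two-vertex two-level enhanced graphs survive (multi-vertex graphs contract under $\phi$ to boundary strata of higher codimension, as in the proof of Lemma~\ref{lem:rhyp}). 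Combined with the identity $-\phi_*\xi = D^\mu$, the proposition reduces to computing, for each boundary divisor $\delta_T$ of $\Mbar{0}{n}$, the coefficient $c_S(T)$ given by the sum of $n_{\Gamma,S}$ over two-vertex graphs $\Gamma$ with vertex partition $\{T, T^c\}$.

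The key step is the multiplicity calculation. For a two-vertex two-level enhanced graph $\Gamma$ with top vertex carrying marked points $V\subset[n]$ and single edge of enhancement $\kappa=-1-\sum_{j\in V}m_j$ (existing iff $\kappa\geq 1$), local period coordinates near $D_\Gamma$ exhibit the residues at bottom-level poles scaling as $t^\kappa$ in the transverse parameter $t$, whereas top-level residues are constant to leading order. Hence the pullback of the defining form $F_S=\sum_{j\in S}X_j$ expands as
$$\pi^*F_S \;=\; \sum_{j\in S\cap V^-} r_j^{\mathrm{top}} \;+\; t^\kappa\sum_{j\in S\setminus V^-} r_j^{\mathrm{bot}} \;+\; O(t^{\kappa+1}).$$
The residue theorem on the top vertex forces $\sum_{j\in V^-}r_j^{\mathrm{top}} = 0$, so the $t^0$ term vanishes identically on $D_\Gamma$ precisely when $S\cap V^-\in\{\emptyset, V^-\}$. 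In these cases the multiplicity of $D_\Gamma$ in $\pi^*H_S$ is exactly $\kappa$ (the bottom-residue leading coefficient being generically non-zero, since $\eta_{\mathrm{bot}}$ is the unique differential with its prescribed divisor up to $\CC^*$); otherwise $D_\Gamma$ is not in the support.

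For each $T\subset[n]$ with $|T|,|T^c|\geq 2$ there are at most two two-vertex candidates: $\Gamma_T$ with top $=T$ and enhancement $\kappa_T=|T^-|-a_T-1$ (existing iff $|T^-|\geq a_T+2$), and $\Gamma_{T^c}$ with top $=T^c$ and enhancement $\kappa_{T^c}=a_T-|T^-|+1$ (existing iff $a_T\geq|T^-|$). Since $\kappa_T+\kappa_{T^c}=0$, at most one of them is $\geq 1$, so at most one such graph exists. The case analysis then reads off directly: in Case 1 ($T^-=S$), whichever graph exists contributes, combining into the absolute value $|a_T-|T^-|+1|$; in Case 2 ($S\subsetneq T^-$), only $\Gamma_{T^c}$ can contribute since $(T^c)^-\cap S=\emptyset$ is automatic, yielding $\max\{0, a_T-|T^-|+1\}$; in Case 3 ($T^-\subsetneq S$), only $\Gamma_T$ contributes via $T^-\subset S$, yielding $\max\{0, |T^-|-1-a_T\}$; and in Case 4 neither graph satisfies the inclusion/disjointness condition.

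The main obstacle is the local multiplicity computation, which relies on the smooth structure of the multi-scale compactification near $D_\Gamma$ (so that $D_\Gamma$ is cut out by a single local equation $t=0$ of order one) together with the explicit uniqueness up to $\CC^*$ of the rational differential $\eta_{\mathrm{bot}}$ with prescribed divisor (ensuring the coefficient of $t^\kappa$ in $\pi^*F_S$ is generically non-zero on $D_\Gamma$). The subsequent case analysis then reduces to combinatorial bookkeeping, checking that the automatic inclusion or disjointness conditions on $S\cap V^-$ for $V\in\{T,T^c\}$ match exactly the four cases declared in the formula, and verifying that the convention of summing over all $T\subset[n]$ avoids double-counting because at most one of $c_S(T)$, $c_S(T^c)$ is non-zero.
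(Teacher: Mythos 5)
Your overall route is the same as the paper's: write $\pi^*H_S=\widetilde{H}_S+\sum_\Gamma n_{\Gamma,S}D_\Gamma$, push forward by $\phi$ so that only two-vertex two-level graphs survive, and run a case analysis comparing $T^-$ with $S$. Where you genuinely differ is in where the factor $\kappa_e$ enters. You place it in the multiplicity $n_{\Gamma,S}$, via the order-$\kappa$ vanishing of the bottom-level residues in the transverse parameter $t$, and implicitly take $\phi_*D_\Gamma=\delta_T$ with degree one; the paper instead asserts that ``transversality is clear'' (so $n_{\Gamma,S}=1$) and puts the factor into $\phi_*D_\Gamma=\kappa_e\delta_T$ by ``counting prong matchings''. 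Only one of these accountings can be right, and yours is the defensible one: for a single vertical edge the $\kappa_e$ prong matchings form a single orbit under the twist group, so $\phi|_{D_\Gamma}$ is generically injective onto $\delta_T$, while the level-$(-1)$ periods do rescale by $t^{\ell_\Gamma}=t^{\kappa_e}$, so $\pi^*F_S$ vanishes to order exactly $\kappa_e$ once the top-level term vanishes identically. Indeed, degree one for $\phi|_{D_\Gamma}$ is what makes Lemma~\ref{lem:rhyp} consistent with the coefficient $\ell_\Gamma$ (not $\ell_\Gamma^2$) in \eqref{eq:taut}, so your version also repairs an internal tension in the paper's argument. Your residue-theorem observation $\sum_{j\in V^-}r_j^{\mathrm{top}}=0$ (the node is a zero of $\eta_{\mathrm{top}}$, not a pole) is correct and is the right criterion for when $D_\Gamma$ lies in the support.

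There is, however, one genuine slip in your case analysis. In ``Case 4'' you assert that neither graph satisfies the inclusion/disjointness condition, but when $S\cap T^-=\emptyset$ with $T^-\neq\emptyset$ the graph with $T$ on top \emph{does} satisfy $S\cap V^-=\emptyset$ (every pole of $S$ sits on the bottom level and is sent to zero), so $D_{\Gamma_T}$ is a component of $\pi^*H_S$ whenever $\kappa_T=|T^-|-a_T-1\geq 1$; for instance $\mu=(2^2,-1^6)$, $S=\{3,4\}$, $T=\{5,6\}$. This contribution is not lost from your final formula only because, under your convention of summing over all $T\subset[n]$, it is exactly the value $c_S(T^c)$ assigned by Case 2 to the complementary representative (one checks $a_{T^c}-|(T^c)^-|+1=\kappa_T$), so the identity $D^\mu_S=D^\mu-\sum_Tc_S(T)\delta_T$ still closes and ``at most one of $c_S(T),c_S(T^c)$ is non-zero'' survives. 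But your stated reason for $c_S(T)=0$ in Case 4 is false and should be replaced by the observation that the disjoint sub-case is booked under the complementary representative. Be aware that the paper's own proof has the same omission (its three-case list misses ``$T$ on top, $S\cap T^-=\emptyset$'') and moreover normalises $n\notin T$, under which the formula would genuinely drop these terms; your all-$T$ reading is what makes the stated class come out correctly.
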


\begin{proof}
Pulling back $H_S$ from $\PP^{n-m-2}$ we obtain
$$\pi^*\OO(1)=\pi^*H_S=\widetilde{H}_S+\sum_{\Gamma\in\mathcal{T}}D_\Gamma$$
where $\mathcal{T}$ is the collection of level graphs $\Gamma$ such that the image of $D_\Gamma$ is supported on $H_S$. Transversality is clear and implies all such divisors $D_\Gamma$ appear with multiplicity one. We now observe that 
$$ \phi_*D_\Gamma=\begin{cases} \kappa_e\delta_S&\text{ if $\Gamma$ has a unique vertical edge}\\
 \delta_S&\text{ if $\Gamma$ has a unique horizontal edge}\\
0&\text{otherwise.} \end{cases}  $$
If $\Gamma$ has more than one edge it is contracted by $\phi$, the multiplicity $\kappa_e$ counts the number of prong matchings in the case of $\Gamma$ having a unique vertical edge. We refer the reader to \cite{BCGGM} for a discussion of prong matchings. In the case that $\Gamma$ has a unique horizontal edge there is a unique prong matching, though this case will play no role as no such $\Gamma$ appears in $\mathcal{T}$.

Let $\Gamma$ be a two vertex, two level graph such that $D_\Gamma$ has image $\delta_T$ in $\Mbar{0}{n}$ for $n\notin T$. Then the image of $D_\Gamma$ lies in $H_S$ (and hence $\Gamma\in \mathcal{T}$) in three cases:
\begin{enumerate}
\item
$T^-=S$, with the markings of $T$ appearing on either upper or lower level of $\Gamma$, 
\item
$S\subsetneqq T^-$ and the markings of $T$ appear on the lower level of $\Gamma$,
\item
 $T^-\subsetneqq S$ and the markings of $T$ appear on upper level of $\Gamma$.
\end{enumerate}
The specification of $c_S(T)$ computes $\kappa_e$ in these cases and returns zero otherwise.  Hence we obtain
$$\phi_*\widetilde{H}_S=\phi_*\pi^*\OO(1)-\sum_{\Gamma\in\mathcal{T}}\phi_*D_\Gamma$$
which returns the required formula.
\end{proof}


\end{document}